\newtheorem{thm}{Theorem}[section]
\newtheorem{prop}[thm]{Proposition}
\newtheorem{cor}[thm]{Corollary}
\newtheorem{asu}[thm]{Assumption}
\newcommand{\sym}{\operatorname{Sym}}
\newcommand{\total}{\operatorname{Total}}
\newcommand{\Total}{\total}
\newcommand{\N}{\mathcal{N}(0,1)}
\newcommand{\reals}{\mathbb{R}}
\author{\name{Thomas Flynn}\thanks{CONTACT T. Flynn. Email: tflynn@bnl.gov}
  \affil{Computational Science Initiative, Brookhaven National Laboratory, Upton, NY 11973.} }
\date{}
\begin{document}

\title{A persistent adjoint method
  with dynamic time-scaling
  and an application to mass action kinetics
  \thanks{This research was supported, in part, under National Science Foundation Grants CNS-0958379, CNS-0855217, ACI-1126113 and the City University of New York High Performance Computing Center at the College of Staten Island.}
}
\maketitle
\begin{abstract}
  In this article we consider an optimization problem where the objective function is evaluated at the fixed-point of a contraction mapping parameterized by a control variable, and optimization takes place over this control variable. Since the derivative of the fixed-point with respect to the parameter can usually not be evaluated exactly, one approach is to introduce an adjoint dynamical system to estimate gradients.  Using this estimation procedure, the optimization algorithm alternates between derivative estimation and an approximate gradient descent step. 
We analyze a variant of this approach involving dynamic time-scaling,  where after each parameter update the adjoint system is iterated until a convergence threshold is passed. 
We prove that, under certain conditions,  the algorithm can find approximate stationary points of the objective function. We demonstrate the approach in the settings of an inverse problem in chemical kinetics, and learning in attractor networks. 
\end{abstract}

\begin{keywords}
dynamic time-scaling, fixed-point, chemical reaction networks, adjoint, gradient descent
\end{keywords}
%\keywords{dynamic time-scaling, fixed-point, chemical reaction networks, adjoint, gradient descent}

\section{Introduction}
In this work we consider an optimization problem subject to a fixed-point constraint:
\begin{equation}\label{minimp}
  \min\limits_{w \in W} e(x) \quad\text{subject to} \quad x = f(x,w)
\end{equation}
where the function $f:X \times W \rightarrow X$ satisfies a contraction property, and $e : X \rightarrow \mathbb{R}$ is a loss function over points in $X$.  Throughout, $W$ is some Euclidean space and $X$ is a closed convex subset of Euclidean space.
We interpret $f(x,w)$ as specifying the evolution of a dynamical system
$x_{n+1} = f(x_{n},w)$ with parameter $w$. The contraction property means that for each $w$ there is a unique fixed-point $x^{*}(w)$. The problem can then be understood as finding the $w$ so that the equilibrium point $x^{*}(w)$ is optimal for the loss function $e$.

%\footnote{Other problems of the form (\ref{minimp}) include optimizing the stationary cost of a Markov chain; in this case $X$ may be a space of probability measures, $f$ a transition operator, $x^{*}$ is a stationary distribution. and contraction corresponds to ergodicity. Some continuous-time optimal control problems are included by taking $X$ to be a space of continuous functions, $f$ to be a Picard operator, and $e$ to be an error functional.} 
A first approach to problem \eqref{minimp} might be to use gradient descent, and one can invoke the implicit function theorem to obtain a formula for the derivative of $(e\circ x^{*})(w)$ in terms of the derivatives of $f$ and $e$ (see Equation \eqref{abc-eqn} below).
However, using this formula directly requires computing 
$x^{*}$,
which may only be available asymptotically, via a numerical method for solving the equation 
$f(x,w) = x$.
In addition, when considering a gradient descent type scheme, a sequence of such systems needs to be solved, corresponding to the successive 
$w_{n}$
generated. When the numerical solver for these systems is sufficiently well behaved, one would expect a gain in efficiency when the solver for
$f(x,w_{n+1}) = x$
at time $n+1$ is initialized at the approximate solution to 
$f(x,w_{n}) = x$
obtained at time $n$, a strategy known as warm-starting. To ensure some type of convergence, one must adapt the relative rates, or time-scales, of the parameter process $\{w_{n}\}$ and the numerical solver $\{x_{n}\}$ so that the derivative estimates obtained are accurate enough, without spending too much time in gradient estimation that optimization becomes prohibitively expensive. More generally, the auxiliary process need not only consist of the dynamic variables 
$\{x_{n}\}$
for solving $f(x,w) = 0$; there may be a variety of ways to construct an auxiliary dynamical system which helps in approximating derivatives. Based on these considerations, in this work we propose an algorithm for solving problem \eqref{minimp} which alternates between the two phases of
1) iterating a numerical solver to compute approximate gradients and
2) an approximate descent step. The algorithm uses dynamic time-scaling, in which the time spent iterating the solver is adapted based on properties of previous derivative estimates. The resulting algorithm is termed \textit{persistent adjoint method}; persistence refers to the way that derivative estimates are carried over from one step to the next, and adjoint refers to the method used to approximate the derivatives.
The general form of the algorithm is listed as Algorithm \ref{dynalgo} below. Under conditions on the functions $f$ and $e$ in problem \eqref{minimp} and the numerical constants used in the algorithm, we prove gradient convergence of the algorithm. Formally, defining $E=( e\circ x^*)$, we show  convergence of the sequence $ \left\{\frac{\partial E}{\partial w}(w_{n})\right\}_{n\geq 1}$ to zero.  This appears below in our main convergence result, Theorem \ref{adjgrad}.
\subsection{Related work}
Gradient based approaches to fixed-point optimization have been studied in the context of neural networks, as the problem arises from optimizing attractor networks, or neural networks with cycles in their connectivity graphs. In this context, the goal of solving problem \eqref{minimp} is to find the weights on connections between nodes so that a given input drives the network to a given steady state.
Several authors independently introduced generalizations of back-propagation to networks with cycles \cite{almeida,pineda87,atiya}. Shortly thereafter, Pineda  \cite{pineda88,pineda89} formulated a variant of the procedure as the simultaneous integration of three systems, corresponding to the underlying neural network, an adjoint system, and an approximate gradient flow. Similar procedures were analyzed in \cite{riaza,tseng} using results for singularly perturbed systems. To ensure gradient convergence, the relative time-scaling of these systems is crucial. The issue of time-scales was treated, in the more general setting of continuous-time contracting systems (in the sense of \cite{lohmiller}), by the present author in  \cite{flynn}. In that work, the author gave requirements on the time-scale parameters and initial conditions so that a type of convergence can be guaranteed. 

One stochastic variant of problem \eqref{minimp} is the problem of minimizing the average cost at the stationary distribution of a Markov chain. In that case, $f$ can be a transition operator, and $x^{*}$ is a stationary distribution. In \cite{younes} an analysis was performed of a two time-scale stochastic approximation algorithm which can be applied to some problems of this type. That author considers a procedure where one component of the system estimates gradients while the other performs an approximate descent step. Applied to the Boltzmann machine, this procedure is termed \textit{persistent contrastive divergence} \cite{tieleman}. Hornik and Kuan \cite{hornik1994} obtained a convergence result for a variant of problem \eqref{minimp} when $f$ represents a contracting neural network receiving stochastic inputs.  That work differs from ours, in that they consider a fixed-time scale, and use decreasing step-sizes. Secondly, Hornik and Kuan mostly focused on the asymptotic behavior of the algorithms, while the approach we follow here lends it self to non-asymptotic analysis.

The adjoint method is a well-known technique for computing sensitivities in dynamic optimization \cite{gilesintro,peter2010}.
Several authors have established that the necessary gradients can be approximated by iterating an auxiliary dynamical system that converges to the true derivatives \cite{giles,almeida}. The present work is aimed at establishing convergence of the \textit{overall optimization procedure} when such approximation methods are used.
Coupled with a warm-starting technique for solving the state and adjoint equations, in PDE-constrained optimization it is known as the one-shot method \cite{taasan}.
In brief, the one-shot method consists of simultaneously time-stepping an underlying integrator, an adjoint solver, and  a parameter update process.
The use of approximate gradients and warm-starting lead to much faster algorithms \cite{hazra,gunther} but these features also complicate the convergence analysis of optimization algorithms.
In order to prove convergence, the relative time-scales of the auxiliary system (state and adjoint integration) and the parameter process (approximate gradient descent) needs to be considered in analyzing these schemes.
Several authors have described practical implementations of one-shot methods with dynamic time-scaling \cite{gunther,jaworski}.
A convergence proof for a class of one-shot methods using adaptive time-scaling was given in \cite{hamdi2011}. The procedure considered in \cite{hamdi2011} is a quasi-Newton algorithm that requires computing derivatives of a doubly augmented Lagrangian function at each iteration, and uses a line search algorithm in the beginning stages of optimization to guarantee convergence. Using a doubly-augmented Lagrangian  leads to computation of higher order derivatives of the functions $f$ and $e$. In this article we focus on an algorithm that uses only the first order derivatives of $f$ and $e$. 
\subsection{Outline}
In Section \ref{sec:dynopt} we introduce the prototype algorithm for optimization with dynamic time-scaling. The theorem in that section concerns how to set the time-scale parameters in a two time-scale system so that one component remains near equilibrium, in a specific sense described below. 
Then, Section \ref{sec:appopt} discusses the relevance of dynamic approximation for optimization, specifically that a function
$E : W \rightarrow \mathbb{R}$
can be optimized by such a two time-scale procedure when its derivative can be computed by a contraction mapping; this contraction will define the auxiliary process discussed above.
In Section \ref{sec:optapp} we focus on dynamic optimization, that is, how to apply Algorithm \ref{dynalgo} in the specific context of optimizing the fixed-point of a dynamical system $f$. We review how to construct a useful auxiliary system, based on adjoint sensitivity analysis and show that it can be integrated with the dynamic approximation algorithm to yield an optimization procedure.
In Section \ref{sec:appl} this optimization procedure is applied to two problems. The first concerns an inverse problem for chemical reaction networks. The optimization problem is to tune the reaction rates of a  reaction network so that the network reaches a desired steady state when presented with a certain input. The second problem concerns attractor networks, a type of neural network with feed-back connections. Attractor networks specify dynamical systems that have unique fixed-points under certain conditions, and the optimization problem is to tune the weights of the network to have a given set of fixed-points, corresponding to a set of inputs.

\section{Dynamic approximation algorithm}\label{sec:dynopt}
A straightforward approach to solve the optimization problem \eqref{minimp} is to use  gradient descent:
\[ w_{n} = w_{n-1} - \epsilon \frac{\partial (e \circ x^{*}) }{\partial \, w} (w_{n-1}) \]
where $\epsilon>0$ is a small step-size. This approach is impractical in most cases, because  calculating the required derivative is usually at least as difficult as solving the equation $f(x,w) = x$ for $x$. However, in some cases one can find a dynamical system $z^{+} = T(z,w_{n-1})$, parameterized by $w$,  which helps to compute the gradient in the sense that
$\frac{\partial }{\partial w}(e\circ x^*)(w_{n-1}) = g( z^{*}(w_{n-1}), w_{n-1})$,
where $z^{*}(w_{n-1}) = T(z^{*}(w_{n-1}),w_{n-1})$, for some function $g$. This raises a number of possibilities for approximation based on iterating $T$ and using convergence and continuity properties to bound the error in the resulting derivative estimates.
%the most basic one being that $z^{*}(w_{n-1})$ can be approximated by iterating $T$ for a large number of steps.
First we study this type of approximation in a general setting, and we will investigate specific choices for $T$ and $g$  in Section \ref{sec:optapp}. 
The general type of procedure we analyze is Algorithm \ref{dynalgo} shown below.

\begin{algorithm}
\begin{algorithmic}[1]
  \State \textbf{input:} functions $T: Z \times W \rightarrow Z$, and $g: Z \times W \rightarrow W$,
  parameters $\delta$ and $\epsilon$,\\\hspace{3em} initial state $z_{0} \in Z$ and $w_{0} \in W$.
\State \textbf{initialization:} $c_{1} = \delta\|g(z_{0},w_{0})\|_{W}$ \label{c1-init}
\For {$n = 1,2,\hdots$}
%\State $z_{n} \gets z_{n-1}$
%\State
%$t_n =
%\inf\left\{
%t \geq 0
%\,\middle|\,
%\left\|T^{t}(z_{n-1},w_{n-1}) - T^{t-1}(z_{n-1},w_{n-1})\right\|_Z \leq c_n
%\right\}
%$
%\State $z_n  \gets T^{t_n}(z_{n-1}, w_{n-1})$
\State $z_n^0 \gets z_{n-1}$
\State $i \gets 0$
\Repeat
\State $i \gets i+1$
\State $z_{n}^{i} = T(z_n^{i-1},w_{n-1})$
\Until $\|z_n^i  - z_n^{i-1}\|_Z \leq c_n$
\State $z_n \gets z_n^{i}$
%\State $z^{+}_{n} \gets T(z_{n},w_{n-1})$
%\Until $\|z_{n}^{+}-z_{n}\|_{Z} \leq c_{n}$
\State $w_{n} \gets w_{n-1} - \epsilon g(z_{n},w_{n-1})$ \label{w-update}
\State $c_{n+1} \gets \delta\|g(z_{n},w_{n-1})\|_{W}$ \label{c-update}
\EndFor
\end{algorithmic}
\caption{Prototype optimization algorithm with dynamic time-scaling}
\label{dynalgo}
\end{algorithm}
In Algorithm \ref{dynalgo}, $z_{n}$ is deemed to be sufficiently close to equilibrium when the distance between successive iterates, 
$\|z_{n}^{i} - z_{n}^{i-1}\|_{Z}$,
falls below a threshold $c_{n}$. Then a parameter update step is taken.  The next threshold $c_{n+1}$ is based on the magnitude of the increment $\|g(z_{n},w_{n-1})\|_{W}$. 
Let us briefly explain the intuition behind the definition of $c_n$. The rationale behind this criteria is that, from the optimization perspective, a sufficient condition for the sequence 
$w_{n+1} = w_{n} - g(z_{n},w_{n-1})$
to have behavior similar to the iterations of
$w_{n+1} = w_{n} - g(z_{n-1}^*,w_{n-1})$
is the property
\begin{equation}\label{cond1}
  \| g(z_{n-1}^{*},w_{n-1}) - g(z_{n},w_{n-1})\|_W \leq \alpha\|g(z_{n},w_{n-1})\|_W
  \end{equation}
for a small $\alpha$,
where $z^{*}_{n-1}$ stands for  $z^{*}(w_{n-1})$. This is formalized below in Corollary \ref{apxgd}. In case $g$ has a Lipschitz constant $L$ with respect to $z$, a sufficient condition for inequality \eqref{cond1} is
\begin{equation}\label{cond2}
\|z_{n} - z^{*}_{n-1}\|_{Z} \leq \frac{\alpha}{L}\|g(z_{n},w_{n-1})\|_{W}.
\end{equation}
If the map $T$ is a $\beta$-contraction in $z$ then, by a standard property of contraction mappings, we can use the distance between successive iterates as a surrogate for the distance to equilibrium, via the inequality
\begin{equation*}%\label{cond3}
  \|z_{n}^{i} - z_{n-1}^*\|_{Z}   \leq \frac{\beta}{1-\beta}\|x_{n}^{i} - x_{n}^{i-1}\|
  \end{equation*}
Next, as a surrogate for the right side of Equation \eqref{cond2},  we approximate the quantity \( \left\| g(z_{n},w_{n-1}) \right\|_{W} \) by  
$\|g(z_{n-1},w_{n-2})\|_{W}$,
which is available easily since it is the magnitude of the increment applied at the previous iteration. This explains how we arrive at the stopping criteria on line 10 and the definition of $c_n$ on line 13.

Theorem \ref{tracking}, stated below, is the main result we prove about the algorithm.
%It somewhat parallels a result for continuous time systems in \cite{flynn}.
It gives conditions on the functions $T$ and $g$, the constants  $\delta, \epsilon$, and the starting point $z_{0}$, so that inequality \eqref{cond2} will be satisfied for all $n\geq 1$, for any desired value of $\alpha \in (0,1)$.

\begin{thm}\label{tracking} Assume  the inputs to Algorithm 1 satisfy the following:
\begin{enumerate}%[\quad\quad i.]%
\item \label{tracking-t}
  $T:Z\times W \rightarrow Z$ is a $\beta$-contraction in $z$ and $L_{w}T$-Lipschitz in $w$:
  \begin{align*}
    \forall\, z_1,z_2 \in Z, w\in W, \,\quad  \|T(z_1, w) - T(z_2, w)\|_Z &\leq \beta \|z_1 - z_2\|_{Z}, \\
    \forall\, z\in Z, w_1,w_2 \in W, \, \quad \|T(z, w_1) - T(z, w_2)\|_Z &\leq (L_{w}T) \|w_1 - w_2\|_{W}.
    \end{align*}
\item \label{tracking-g} $g:Z\times W \rightarrow W$ is Lipschitz in $z$ and $w$, with constants $L_{z}g$ and $L_{w}g$ respectively:
  \begin{align*}
    \forall\, z_1, z_2 \in Z, w\in W,\,\quad \|g(z_1, w) - g(z_2, w)\|_W &\leq (L_{z}g) \|z_1 - z_2\|_{Z}, \\
    \forall\, z \in Z, w_1,w_2 \in W, \,\quad\|g(z, w_1) - g(z, w_2)\|_W &\leq (L_{w}g) \|w_1 - w_2\|_{W}.
    \end{align*}
\item \label{tracking-coeffs} There are numbers
  $\alpha_{\epsilon},\alpha_{c}, \alpha_{\delta}$  in $(0,1)$ such that
  \begin{align*}
    c &= 
  \alpha_{c}\frac{1}{(L_{z}g)}, \\
  \epsilon &=
  \alpha_{\epsilon}\frac{(1 - \alpha_{c})}{(L_{w}g) + (L_{z}g)(L_{w}T)/(1-\beta)}, \\
  \delta &=
  \alpha_{\delta}\frac{\alpha_{c}(1-\alpha_{c})(1-\alpha_{\epsilon})(1-\beta)}
        {(1 + \alpha_{c})(L_{z}g)\beta}.
        \end{align*}
\item \label{tracking-init} The initial point $z_0$ satisfies  $\|z_{0} - z^{*}_{0}\|_{Z} \leq c\|g(z_{0},w_{0})\|_{W}.$
\end{enumerate}
Then for all
$n \geq 1$
the variables $z_n$ are well-defined (that is, the inner loop of Algorithm 1 terminates) and
\begin{equation}\label{z-cond}
  \|z_{n} - z^{*}_{n-1}\|_{Z} \leq c\|g(z_{n},w_{n-1})\|_{W}.
  \end{equation}
\end{thm}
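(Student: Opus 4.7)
The plan is to prove Theorem \ref{tracking} by induction on $n$, using the initial hypothesis (item \ref{tracking-init}) as the base case $n=0$. At each inductive step I need to establish two things: (i) the inner \textbf{Repeat} loop terminates, so $z_n$ is well-defined, and (ii) the tracking inequality \eqref{z-cond} holds.

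For termination, I observe that the inner loop iterates the map $T(\cdot,w_{n-1})$, which by assumption \ref{tracking-t} is a $\beta$-contraction and hence has a unique fixed point $z_{n-1}^*$; the successive differences $\|z_n^{i}-z_n^{i-1}\|_Z$ decay geometrically, so the test is passed in finitely many steps whenever $c_n>0$, and when $c_n=0$ one checks directly from the inductive hypothesis that $z_{n-1}=z_{n-1}^*=z_{n-1}^{*}$, so the criterion is met at $i=1$. For the tracking inequality, I will use the standard contraction estimate that a successive-iterate bound controls distance to the fixed point: since $z_n = T(z_n^{i-1},w_{n-1})$ at termination,
\begin{equation*}
  \|z_n - z_{n-1}^*\|_Z \;\leq\; \frac{\beta}{1-\beta}\,\|z_n^{i}-z_n^{i-1}\|_Z \;\leq\; \frac{\beta\delta}{1-\beta}\,\|g(z_{n-1},w_{n-2})\|_W.
\end{equation*}
This gives a bound in terms of the \emph{previous} step's gradient magnitude, whereas \eqref{z-cond} requires one in terms of the \emph{current} magnitude $\|g(z_n,w_{n-1})\|_W$.

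The core of the argument is therefore to compare these two quantities. Writing $A=\|g(z_{n-1},w_{n-2})\|_W$ and $B=\|g(z_n,w_{n-1})\|_W$, the triangle inequality and assumption \ref{tracking-g} give $A\leq B + (L_{z}g)\|z_n-z_{n-1}\|_Z + (L_{w}g)\epsilon A$. I then split $\|z_n-z_{n-1}\|_Z$ with the intermediate points $z_{n-1}^*$ and $z_{n-2}^*$: the term $\|z_{n-1}-z_{n-2}^*\|_Z$ is controlled by $cA$ via the inductive hypothesis; the term $\|z_{n-1}^*-z_{n-2}^*\|_Z$ is controlled by $\tfrac{(L_{w}T)\epsilon}{1-\beta}A$ using the standard fact that $z^*$ is $\tfrac{L_{w}T}{1-\beta}$-Lipschitz in $w$ (a consequence of assumption \ref{tracking-t}); and $\|z_n-z_{n-1}^*\|_Z$ is bounded by $\tfrac{\beta\delta}{1-\beta}A$ from the display above. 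Collecting these yields $A \leq B + K A$ with
\begin{equation*}
  K \;=\; (L_{z}g)\Bigl[c + \tfrac{(L_{w}T)\epsilon}{1-\beta} + \tfrac{\beta\delta}{1-\beta}\Bigr] + (L_{w}g)\epsilon.
\end{equation*}

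The main (and most tedious) obstacle is verifying that the explicit formulas for $c$, $\epsilon$, $\delta$ in assumption \ref{tracking-coeffs} force $K<1$ and further imply $\tfrac{\beta\delta}{(1-\beta)(1-K)}\leq c$, which is exactly what is needed to conclude $\|z_n-z_{n-1}^*\|_Z \leq cB$. Plugging in, $(L_{z}g)c=\alpha_c$ and $(L_{w}g)\epsilon + (L_{z}g)\tfrac{(L_{w}T)\epsilon}{1-\beta} = \alpha_\epsilon(1-\alpha_c)$, while $(L_{z}g)\tfrac{\beta\delta}{1-\beta}=\tfrac{\alpha_\delta \alpha_c(1-\alpha_c)(1-\alpha_\epsilon)}{1+\alpha_c}$. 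A short factoring shows $1-K = (1-\alpha_c)(1-\alpha_\epsilon)\tfrac{1+\alpha_c-\alpha_\delta\alpha_c}{1+\alpha_c}$, whence the target inequality reduces to $\alpha_\delta\leq 1$, which holds by assumption. The hard part is really just the algebraic bookkeeping: making sure the three slack parameters $\alpha_c,\alpha_\epsilon,\alpha_\delta$ are placed so that the contributions from $c$, from the $\epsilon$-induced drift in $w$ and $z^*$, and from the stopping threshold $\delta$ each absorb cleanly into the bound, leaving exactly the margin needed to close the induction.
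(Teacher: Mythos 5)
Your proposal follows essentially the same route as the paper's proof: the same induction, the same contraction estimate converting the stopping test into a bound on $\|z_n - z_{n-1}^*\|_Z$, the same three-term decomposition of $\|z_n-z_{n-1}\|_Z$ through $z_{n-1}^*$ and $z_{n-2}^*$, and the same constant (your $K$ is the paper's $A$) with the identical algebraic reduction of $\tfrac{\beta\delta}{(1-\beta)(1-K)}\leq c$ to $\alpha_\delta\leq 1$. The only difference is cosmetic: the paper treats the base case $n=1$ separately (where the drift term $\|w_0-w_{-1}\|_W$ is absent, giving a smaller constant $B<A$), whereas you fold it into the induction by reading assumption \ref{tracking-init} as the $n=0$ invariant, which also closes.
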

\begin{proof}
The theorem requires a Lipschitz constant for the map 
$w \mapsto z^{*}(w)$; 
so long as $T$ is a $\beta$-contraction in $z$, and $L_{w}T$-Lipschitz in $w$
then
\begin{equation}\label{fplip}
\|z^{*}(w_1) - z^{*}(w_2)\|_{Z} \leq \frac{(L_{w}T)}{1 - \beta}\|w_1-w_2\|_W.
\end{equation} 
  First we consider the inductive step. Fix an $n > 1$. We assume that 
$z_{n-1}$ is well-defined and
\begin{equation}\label{ind-hyp}
  \|z_{n-1} - z_{n-2}^{*}\|_{Z} \leq c \|g(z_{n-1},w_{n-2})\|_{W}.
\end{equation}
  We show that $z_n$ is well-defined and
Equation \eqref{z-cond} holds.

If
$c_n > 0$,
then the inner loop terminates in a finite  number of steps, since $T$ is a contraction mapping in
$z$, and
$z_n$ is well-defined.
If
$c_n = 0$,
then we claim the inner loop terminates after a single step, because in this case,
$z_{n}^0 = z_{n-1}^{*}$. To see this, note that by definition of $c_n$ on Line \ref{c-update} of Algorithm 1,  $c_n=0$ is equivalent to
$g(z_{n-1},w_{n-2}) = 0$. According to Equation \eqref{ind-hyp}, this entails that
\begin{equation}\label{entailer-1}
  z_{n-1} = z_{n-2}^{*},
\end{equation}
and by Line \ref{w-update} of Algorithm 1 there is no parameter update, so
\begin{equation}\label{entailer-2}
  w_{n-1} = w_{n-2}.
  \end{equation} Combining  \eqref{entailer-1} and \eqref{entailer-2}, we see that
$z_{n-1} = z_{n-1}^{*}$. Therefore $z_{n}^{0}= z_{n-1} = z_{n-1}^{*}$, and $z_n$ is well-defined. Next, we show that Equation \eqref{z-cond} holds.
Using a simple property of contraction mappings, and the definition of Algorithm \eqref{dynalgo}, the $z_{n}$ emitted by the inner loop satisfies
\begin{equation}\label{algoprop}
  \begin{split}
  \|z_n - z_{n-1}^{*}\|_Z
  &=  \|z_{n}^{i} - z_{n-1}^{*}\|_Z \\
  &\leq \frac{\beta}{1-\beta}\|z_{n}^{i} - z_{n}^{i-1}\|_Z \\
  &\leq \frac{\beta}{1-\beta}c_n %\\  &
  =
  \frac{\beta}{1-\beta}\delta\|g(z_{n-1},w_{n-2})\|_{W}.
  \end{split}
  \end{equation}
Next, applying the Lipschitz  properties of $g$ we have
\begin{equation}\label{gzn}
  \begin{split}
  \|g(z_{n-1},w_{n-2})\|_{W} &\leq 
    \|g(z_{n},w_{n-1})\|_{W} \\&+ 
    (L_{w}g)\|w_{n-1}-w_{n-2}\| _{W}+    
    (L_{z}g)\|z_{n}-z_{n-1}\|_{Z}.
    \end{split}
\end{equation}
Applying the triangle inequality  to 
$\|z_{n} - z_{n-1}\|_{Z}$ 
we get
\begin{equation}
  \begin{split}
\|z_{n} - z_{n-1}\|_{Z} &\leq 
  \|z_{n} - z_{n-1}^{*}\| _{Z} +
  \|z_{n-1}^{*} - z_{n-2}^{*}\|_{Z} + 
  \|z_{n-2}^{*} -z_{n-1}\|_{Z} \\
 &\leq 
  \left(\frac{\beta}{1-\beta}\delta + \epsilon\frac{(L_{w}T)}{1-\beta} + c\right)
     \|g(z_{n-1},w_{n-2})\|_{W}.\label{succz}
  \end{split}
  \end{equation}
Where in the second inequality we have used \eqref{algoprop}, the Lipschitz property of $z^{*}$, and the inductive hypothesis, in sequence.
Furthermore, by  definition of Algorithm \ref{dynalgo}, the $w_n$ satisfy
\begin{equation}\label{wn-satisfier}
  \|w_{n-1} - w_{n-2}\|_{W} = \epsilon\|g(z_{n-1},w_{n-2})\|_{W}.
  \end{equation}
Combining \eqref{gzn}, \eqref{succz} and \eqref{wn-satisfier}, we obtain
\begin{equation}\label{obtainer}
  \|g(z_{n-1},w_{n-2})\|_{W} \leq 
  \|g(z_{n},w_{n-1})\|_{W} + A\|g(z_{n-1},w_{n-2})\|_{W}
\end{equation}
where $A$ is defined as 
$$A = \left[\Big((L_{w}g) + (L_{z}g)\frac{(L_{w}T)}{1-\beta}\Big)\epsilon +
            (L_{z}g)c +
  (L_{z}g)\frac{\beta}{1-\beta}\delta\right].$$
We will use that $A<1$. To see why this is so, note that
\begin{align*}
  A &=
      %\alpha_{\epsilon}(1-\alpha_c) +
      %      (L_{z}g)c +
 % (L_{z}g)\frac{\beta}{1-\beta}\delta\\
 % &=
 % \alpha_{\epsilon}(1-\alpha_c) +
 % \alpha_c +
 % (L_{z}g)\frac{\beta}{1-\beta}\delta \\
 % &=
  \alpha_{\epsilon}(1-\alpha_c) +
  \alpha_c +
    \alpha_\delta\frac{\alpha_c(1-\alpha_c)(1-\alpha_{\epsilon})}{1+\alpha_c}
  \\  &
        =
  \alpha_c + (1-\alpha_c)\left(\alpha_{\epsilon} + (1-\alpha_{\epsilon})\frac{\alpha_c}{1+\alpha_c}\right) < 1.
\end{align*}
Combining \eqref{algoprop} and \eqref{obtainer}, then, 
\begin{align*}
\|z_{n} - z_{n-1}^{*}\|_{Z} &\leq 
  \frac{\beta}{1-\beta}
  \delta\frac{1}{1-A}\|g(z_{n},w_{n-1})\|_{W}
\end{align*}
We claim that the coefficient on the right-hand side is upper-bounded by $c$. Observe that
\begin{equation}\label{one-min-a}
  1 - A = ( 1 - \alpha_c )( 1 - \alpha_{\epsilon} )( 1 - \alpha_c/(1 + \alpha_c) )
\end{equation}
Using Equation \eqref{one-min-a} and the definition of $\delta$, then
\begin{equation}\label{b-1-min-b}
  \begin{split}
  \frac{\beta}{1-\beta}\delta \frac{1}{1-A} &=
  \alpha_{\delta}\alpha_c(1-\alpha_c)(1-\alpha_{\epsilon})\frac{1}{(1+\alpha_c)L_{z}g}\frac{1}{1-A} \\
  &=
  c\alpha_{\delta}(1-\alpha_c)(1-\alpha_{\epsilon})\frac{1}{(1+\alpha_c)}\frac{1}{(1-\alpha_c)(1-\alpha_{\epsilon})(1-\alpha_c/(1+\alpha_c))} \\
  &=
  c   \alpha_{\delta}\frac{1}{(1+\alpha_c)}\frac{1}{(1-\alpha_c/(1+\alpha_c))}  \\
  &=
  c   \alpha_{\delta}\frac{1}{1+\alpha_c -\alpha_c}  \\
  &=
  c \alpha_{\delta} < c.
  \end{split}
\end{equation}
This concludes our argument for the case $n>1$. Next we treat the base case $n=1$. We will show that $z_1$ is well-defined and $\|z_{1} - z_{0}^{*}\|_{Z} \leq c\|g(z_{1},w_{0})\|_{W}$.

If $c_1 > 0$, then the inner loop terminates in a finite number of steps, as a consequence of the contraction mapping theorem.
If
$c_1 = 0$,
then according to Line \ref{c1-init},  we have
$g(z_0,w_0) = 0$. But Assumption \ref{dynopt}.\ref{init-point-asu} on $z_0$ would imply $z_0 = z_0^*$, which means  $z_1^0$ is already a fixed-point of $T(\cdot,w_{0})$, and the inner loop terminates in one iteration. Therefore $z_1$ is well-defined.

Based on the assumption on $z_{0}$, and by definition of Algorithm \ref{dynalgo}, $z_1$ satisfies
\begin{equation}\label{z1prop}
  \begin{split}
    \|z_{1} - z_{0}^{*}\|_{Z} &=
    \|z_{1}^{i} - z_{0}^{*} \|_{Z} %\\&
    \leq \frac{\beta}{1-\beta}\|z_{1}^{i}  - z_{1}^{i-1}\|_{Z} %\\&
    \leq
    \frac{\beta}{1-\beta}\delta\|g(z_{0},w_{0})\|_{W} .
    \end{split}
\end{equation}
Applying the Lipschitz properties of $g$ yields
\begin{equation}\label{gzo}
  \|g(z_{0},w_{0})\|_{W}
  \leq
  \|g(z_{1},w_{0})\|_{W} + (L_{z}g)\|z_{1} - z_{0}\|_{Z}.
\end{equation}
Combining Assumption \ref{init-point-asu} and inequality \eqref{z1prop}, we have
\begin{equation}\label{z1z0}
  \begin{split}
    \|z_{1} - z_{0}\|_{Z}
    &\leq
    \|z_1 -  z_0^*\|_Z + \|z_0^* - z_0\|_Z %\\&
    \leq \left(c + \frac{\beta}{1-\beta}\delta\right)\|g(z_{0},w_{0})\|_{W}.
  \end{split}
\end{equation}
Then combining \eqref{z1z0} with \eqref{gzo},
\begin{equation}\label{gz0-2}
  \|g(z_{0},w_{0})\|_{W} \leq 
  \|g(z_{1},w_{0})\|_{W} + 
  B\|g(z_{0},w_{0})\|_{W}
\end{equation}
where $B = (c + \frac{\beta}{1-\beta}\delta)$.
Combining \eqref{z1prop} with \eqref{z1z0},
\begin{align*}
  \|z_{1} - z_{0}^{*}\| _{Z}&\leq \frac{\beta}{1-\beta}\delta\frac{1}{1 - B}\|g(z_{1},w_{0})\|_{W} .
\end{align*}
Finally, note that $B<A$ and equation \eqref{b-1-min-b} imply
$\frac{\beta}{1-\beta}\delta\frac{1}{1-B} \leq c\,\alpha_{\delta} < c.$
\end{proof}

Under the same set of assumptions on $T,g$, one can develop of a variant of Algorithm \ref{dynalgo} with a fixed time-scaling, in which $T$ is iterated for a fixed number of times after each parameter update. A similar proof shows that this algorithm can also generate sequences $\{z_{n}\}, \{w_{n}\}$ with the desired properties. This variation of the procedure has been explored in \cite{flynn_mtns}. Although this results in a simpler algorithm, a goal of the dynamic time-scaling was to account for the fact that even in situations where the constants in the algorithm can be calculated, these are likely to be very conservative. 

\section{Applications to Optimization}\label{sec:appopt}
As discussed above, Algorithm \ref{dynalgo} can be applied to optimization when the functions $g$ and $T$ are such that $g(z_{n},w_{n-1})$ approximates the gradient of a function of $w$.
 Proposition \ref{dynopt} of this section makes this precise. The optimization results are based on a convergence theorem for approximate gradient descent, Proposition \ref{apxgd} below.
This result guarantees gradient convergence,
meaning convergence of the sequence
$\left\{\frac{\partial E}{\partial w}(w_{n})\right\}_{n\geq 1}$ to zero when the procedure is applied to a function $E: W \rightarrow \mathbb{R}$.

The following result on approximate gradient descent uses a condition on the approximate derivatives that combines an angle and magnitude condition. This type of requirement appears in other steepest descent results, for example \cite{bertsekas}. The difference between Proposition \ref{prop:gdadd} and the results of \cite{bertsekas} is that we are concerned with deterministic algorithms and constant step-sizes, as opposed to stochastic algorithms and decreasing step sizes.
\begin{prop}\label{prop:gdadd}
Let $E : W \rightarrow \mathbb{R}$
be a continuously differentiable function that is bounded from below and whose gradient is $L$-Lipschitz continuous.  Consider the sequence
$$w_{n+1} = w_{n} - \epsilon \left(\frac{\partial E}{\partial w}(w_{n}) + h_{n}\right).$$
If
$\|h_{n}\|_2 \leq \alpha\|\frac{\partial E}{\partial w}(w_{n})\|_2$
for some
$ \alpha \in [0,1)$ 
and 
$ \epsilon \in (0,1/L]$, then there is a $k$ such that
$E(w_{n+1}) \leq E(w_n) - k\|\frac{\partial E}{\partial w}(w_n)\|^2$. Consequently,
  $E(w_n)$ converges and $\frac{\partial E}{\partial w}(w_n) \rightarrow 0$.
\end{prop}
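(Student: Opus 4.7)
The natural strategy is the standard $L$-smooth descent lemma combined with a careful bookkeeping of the error term $h_n$. Writing $d_n = \frac{\partial E}{\partial w}(w_n) + h_n$ so that $w_{n+1} - w_n = -\epsilon d_n$, I would invoke the well-known consequence of the $L$-Lipschitz gradient assumption,
\begin{equation*}
E(w_{n+1}) \leq E(w_n) + \frac{\partial E}{\partial w}(w_n)^{\top}(w_{n+1}-w_n) + \frac{L}{2}\|w_{n+1}-w_n\|_2^{2},
\end{equation*}
and reduce the proposition to showing that the quantity $\frac{\partial E}{\partial w}(w_n)^{\top} d_n - \tfrac{L\epsilon}{2}\|d_n\|_2^{2}$ is bounded below by a positive multiple of $\|\frac{\partial E}{\partial w}(w_n)\|_2^{2}$.

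The main step is to expand rather than crudely bound $\|d_n\|_2^{2}$. Writing $\nabla_n := \frac{\partial E}{\partial w}(w_n)$ for brevity, I would split
\begin{align*}
\nabla_n^{\top} d_n - \tfrac{L\epsilon}{2}\|d_n\|_2^{2}
&= \bigl(1-\tfrac{L\epsilon}{2}\bigr)\|\nabla_n\|_2^{2} + (1 - L\epsilon)\,\nabla_n^{\top} h_n - \tfrac{L\epsilon}{2}\|h_n\|_2^{2},
\end{align*}
and then apply Cauchy--Schwarz together with $\|h_n\|_2 \leq \alpha \|\nabla_n\|_2$. Because $\epsilon \leq 1/L$, the coefficient $(1-L\epsilon)$ is nonnegative, so the worst case in the cross term is the lower bound $-\alpha\|\nabla_n\|_2^{2}$, and the $\|h_n\|_2^2$ term is controlled by $\alpha^{2}\|\nabla_n\|_2^{2}$. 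Collecting these bounds, the expression inside the brackets telescopes into $(1-\alpha)\bigl[1 - \tfrac{L\epsilon}{2}(1-\alpha)\bigr]\|\nabla_n\|_2^{2}$, and the bracket is at least $1/2$ under the hypotheses $\epsilon \leq 1/L$ and $\alpha \in [0,1)$. This yields the one-step descent inequality with, for instance, $k = \tfrac{\epsilon(1-\alpha)}{2}\bigl[1 - \tfrac{L\epsilon}{2}(1-\alpha)\bigr] > 0$ (independent of $n$).

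Given the per-step decrease $E(w_{n+1}) \leq E(w_n) - k\|\nabla_n\|_2^{2}$, the sequence $\{E(w_n)\}$ is monotone nonincreasing and bounded below, hence convergent. Telescoping and using the lower bound on $E$ gives $\sum_{n\geq 0} \|\nabla_n\|_2^{2} < \infty$, which forces $\frac{\partial E}{\partial w}(w_n) \to 0$.

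The main obstacle is the algebraic one of choosing the right decomposition: bounding $\|d_n\|_2^{2}$ by the loose $(1+\alpha)^{2}\|\nabla_n\|_2^{2}$ would force the restriction $\alpha < \sqrt{5}-2$ and fail to cover the stated range $\alpha \in [0,1)$. The trick is to keep the expansion of $\|d_n\|_2^{2}$ in terms of the cross product $\nabla_n^{\top} h_n$ and the squared error $\|h_n\|_2^{2}$ separately, since the coefficient $(1-L\epsilon)$ in front of the cross term vanishes precisely at the borderline $\epsilon = 1/L$, giving just enough room for every $\alpha < 1$.
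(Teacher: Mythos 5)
Your proof is correct and takes essentially the same route as the paper's: the $L$-smooth descent lemma, an exact expansion that keeps the cross term $\frac{\partial E}{\partial w}(w_n)^{\top}h_n$ with its nonnegative coefficient $(1-L\epsilon)$, the factorization of the resulting coefficient into $(1-\alpha)\bigl[1-\tfrac{L\epsilon}{2}(1-\alpha)\bigr]$, and a telescoping sum. Your constant $k$ carries a harmless extra factor of $\tfrac{1}{2}$ relative to the paper's $k=\epsilon(1-\alpha)\bigl(1-\tfrac{L}{2}\epsilon(1-\alpha)\bigr)$, but the argument is otherwise identical.
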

\begin{proof}
  We begin by  deriving an inequality of the form
  \begin{equation}\label{ineq-to-derive}
    E(w_{n+1}) \leq E(w_{n}) - k\left\|\frac{\partial E}{\partial w}(w_{n})\right\|_{2}^{2}
  \end{equation}
  for some $k>0$. This guarantees that the objective function decreases at each step. Using a second-order Taylor expansion together with the definition of $w_{n+1}$:
$$
E(w_{n+1})
\leq
E(w_{n})
- \epsilon(1-\frac{L}{2}\epsilon)\left\|\frac{\partial E}{\partial w}(w_n)\right\|_2^2
+ \epsilon|1-L\epsilon|\left\|\frac{\partial E}{\partial w}(w_n)\right\|_2\|h_n\|_2
+ \frac{L}{2} \epsilon^{2}\|h_n\|_2^{2}$$
Next, note that $|1-L\epsilon|  = 1-\epsilon$, and use the assumption on $h_n$ to get
\begin{equation}\label{descent-ineq}
E(w_{n+1})
\leq
E(w_n) - \epsilon (1-\alpha)\left(1-\frac{L}{2}\epsilon\left(1-\alpha\right)\right)
\left\|\frac{\partial E}{\partial w}(w_n)\right\|_2^2
\end{equation}
Hence the value of $k$ is
$
k=\epsilon (1-\alpha)\left(1-\frac{L}{2}\epsilon(1-\alpha)\right).
$
Since $\{E(w_n)\}_{n\geq 1}$ is non-increasing and bounded from below (by $E^*$), the limit of the sequence must exist.
Rearranging \eqref{descent-ineq}  and summing over $n=1,2,\hdots,m$,
\begin{equation}
k
\sum\limits_{n=1}^{m}\left\|\frac{\partial E}{\partial w}(w_n)\right\|_2^2
\leq
E(w_1) - E^{*}.
\end{equation}
Hence
$\frac{\partial E}{\partial w}(w_n) \rightarrow 0$.
%Hence
%$$
%\min_{1\leq n \leq m}\|\frac{\partial E}{\partial w}(w_n)\|_2^2 \leq \frac{E(w_1) - E^*}{ m \e%psilon (1-\alpha)\left(1-\frac{L}{2}\epsilon(1-\alpha)\right)}
%$$
\end{proof}

For the present purposes, it is more convenient to work with the following corollary of Proposition \ref{prop:gdadd}.
\begin{cor}\label{apxgd}
  Let  $E : W \rightarrow \mathbb{R}$ be a continuously differentiable function that is bounded from below and whose gradient is  $L$-Lipschitz continuous. Consider the sequence
  \begin{equation}\label{wform}
    w_{n+1} = w_{n} - \epsilon h_{n}.
  \end{equation}
If
$\|h_{n} - \frac{\partial E}{\partial w}(w_{n})\|_{2} \leq \alpha\|h_{n}\|_{2}$
for some $\alpha \in [0,1/2)$ 
and 
$\epsilon \in (0,1/L]$,
then $E(w_{n})$ converges
and $\frac{\partial E}{\partial w}(w_{n}) \rightarrow 0$.
\end{cor}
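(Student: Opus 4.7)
The plan is to reduce this corollary to the previous proposition by rewriting the iteration in the form required there. Starting from $w_{n+1} = w_n - \epsilon h_n$, I set $\tilde h_n := h_n - \frac{\partial E}{\partial w}(w_n)$, so that the update becomes
\begin{equation*}
  w_{n+1} = w_n - \epsilon\left(\frac{\partial E}{\partial w}(w_n) + \tilde h_n\right),
\end{equation*}
which matches the form in Proposition \ref{prop:gdadd}. It then remains to verify the magnitude hypothesis on $\tilde h_n$, i.e.\ to produce an $\alpha' \in [0,1)$ with $\|\tilde h_n\|_2 \leq \alpha'\|\frac{\partial E}{\partial w}(w_n)\|_2$.

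Next I would use the triangle inequality $\|h_n\|_2 \leq \|\frac{\partial E}{\partial w}(w_n)\|_2 + \|\tilde h_n\|_2$ in combination with the hypothesis $\|\tilde h_n\|_2 \leq \alpha\|h_n\|_2$ to deduce
\begin{equation*}
  \|\tilde h_n\|_2 \leq \alpha\left(\left\|\tfrac{\partial E}{\partial w}(w_n)\right\|_2 + \|\tilde h_n\|_2\right),
\end{equation*}
and rearrange to obtain $\|\tilde h_n\|_2 \leq \tfrac{\alpha}{1-\alpha}\|\frac{\partial E}{\partial w}(w_n)\|_2$. The assumption $\alpha < 1/2$ is exactly what is needed to make $\alpha' := \alpha/(1-\alpha) < 1$, which is the hypothesis of Proposition \ref{prop:gdadd}. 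This single algebraic inversion is the only substantive step, so I do not foresee a real obstacle; the role of the bound $\alpha < 1/2$ is to leave enough slack that the residual $\tilde h_n$ still has norm strictly less than that of the true gradient.

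Having verified both the gradient-plus-perturbation form and the perturbation bound, and noting that the step-size condition $\epsilon \in (0,1/L]$ carries over unchanged, I would invoke Proposition \ref{prop:gdadd} directly to conclude that $E(w_n)$ converges and $\frac{\partial E}{\partial w}(w_n) \to 0$. No additional regularity of $E$ beyond what the proposition already requires needs to be introduced, and the corollary follows in essentially one line after the triangle-inequality manipulation above.
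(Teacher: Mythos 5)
Your proposal is correct and follows essentially the same route as the paper's own proof: both rewrite $h_n$ as the true gradient plus a residual, apply the triangle inequality together with the hypothesis $\|h_n - \frac{\partial E}{\partial w}(w_n)\|_2 \leq \alpha\|h_n\|_2$, and rearrange to get the bound $\frac{\alpha}{1-\alpha}\|\frac{\partial E}{\partial w}(w_n)\|_2$, with $\alpha<1/2$ ensuring the resulting constant is below $1$ so Proposition \ref{prop:gdadd} applies. No gaps.
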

\begin{proof}
    In order to apply Proposition \ref{prop:gdadd}, it suffices to show that
  $\|h_n - \frac{\partial E}{\partial w}(w_n)\|_2 \leq r \|\frac{\partial E}{\partial w}(w_n)\|_2$,
  for some
  $r \in [0,1)$.    Under the assumption on $h_n$,
    \begin{align*}
    \left\|h_n - \frac{\partial E}{\partial w}(w_n)\right\|_2
    &\leq
    \alpha
    \left\| \left(h_n - \frac{\partial E}{\partial w}(w_n)\right) + \frac{\partial E}{\partial w}(w_n)\right\|_2\\
    &\leq
    \alpha
    \left\| h_n - \frac{\partial E}{\partial w}(w_n)\right\|_2
    +
    \alpha\left\| \frac{\partial E}{\partial w}(w_n)\right\|_2
    \end{align*}
    Rearranging terms, we obtain
    $$
    \left\|h_n - \frac{\partial E}{\partial w}(w_n)\right\|_2
    \leq
    \frac{\alpha}{1-\alpha}\left\|\frac{\partial E}{\partial w}(w_n)\right\|_2
      $$
    Hence we can take $r= \alpha/(1-\alpha)$. Note that
    $r \in [0,1) \iff \alpha \in [0,1/2)$.

  \end{proof}
Proposition \ref{apxgd}, which concerns gradient descent, and Theorem \ref{tracking}, regarding Algorithm \ref{dynalgo} can be linked by the next result, which gives conditions on a function $E : W \rightarrow \mathbb{R}$ that enable the application of dynamic approximation schemes for optimization. Essentially, we require that the derivative of the function should be computable by a contraction mapping.

\begin{prop}\label{dynopt}
Let 
$E : W \rightarrow \mathbb{R}$
 be a function which is bounded from below, and assume the inputs to Algorithm \ref{dynalgo}
are as follows:
\begin{enumerate}
\item The functions $T$ and $g$ satisfy Assumptions \ref{tracking}.\ref{tracking-t} and \ref{tracking}.\ref{tracking-g}, respectively, \label{dynopt-lip}
\item The function $g$ is such that
 $\frac{\partial E}{\partial w}(w) = g( z^{*}(w), w)$,  where
  $z^{*}(w) = T(z^{*}(w),w)$,  \label{dynopt-g}
\item $\epsilon,\delta,$ and $c$ are defined as in Theorem \ref{tracking}, and the constant $\alpha_{c}$ is chosen with the additional constraint \label{dynopt-c}
$\alpha_{c} < 1/2$, 
\item The initial point $z_0$ satisfies
  $
  \|z_{0} - z^{*}_{0}\|_{Z} \leq \left( \alpha_c / (L_{z}g) \right) \|g(z_{0},w_{0})\|_{W}.
  $ \label{init-point-asu}
\end{enumerate}
Then Algorithm \ref{dynalgo} generates a sequence $w_{n}$ such that
 $\frac{\partial E}{\partial w}(w_{n}) \rightarrow 0$ and $E(w_{n})$ converges.
\end{prop}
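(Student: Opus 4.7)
The plan is to combine Theorem \ref{tracking}, which controls the closeness of $z_n$ to the equilibrium $z_{n-1}^*$, with Corollary \ref{apxgd} on approximate gradient descent. The sequence $\{w_n\}$ generated by Algorithm \ref{dynalgo} has the form $w_n = w_{n-1} - \epsilon h_{n-1}$ with $h_{n-1} := g(z_n, w_{n-1})$, and by Assumption \ref{dynopt}.\ref{dynopt-g} the true gradient at $w_{n-1}$ is $\frac{\partial E}{\partial w}(w_{n-1}) = g(z_{n-1}^*, w_{n-1})$. So one only has to verify the hypothesis $\|h_{n-1} - \frac{\partial E}{\partial w}(w_{n-1})\|_W \leq \alpha \|h_{n-1}\|_W$ for some $\alpha < 1/2$ and that the step-size $\epsilon$ lies in $(0, 1/L]$ for the Lipschitz constant $L$ of $\frac{\partial E}{\partial w}$.

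First I would check that the hypotheses of Theorem \ref{tracking} hold for the present choice of inputs. Assumptions \ref{tracking}.\ref{tracking-t}, \ref{tracking}.\ref{tracking-g}, and \ref{tracking}.\ref{tracking-coeffs} are immediate from \ref{dynopt}.\ref{dynopt-lip} and \ref{dynopt}.\ref{dynopt-c}, while Assumption \ref{tracking}.\ref{tracking-init} follows from \ref{dynopt}.\ref{init-point-asu} since $c = \alpha_c / (L_z g)$. Applying Theorem \ref{tracking} yields $\|z_n - z_{n-1}^*\|_Z \leq c \|g(z_n, w_{n-1})\|_W$ for all $n \geq 1$. Using the Lipschitz property of $g$ in $z$ and the definition of $c$,
\begin{equation*}
\|g(z_n, w_{n-1}) - g(z_{n-1}^*, w_{n-1})\|_W \leq (L_z g) \|z_n - z_{n-1}^*\|_Z \leq \alpha_c \|g(z_n, w_{n-1})\|_W,
\end{equation*}
so by \ref{dynopt}.\ref{dynopt-g} we obtain $\|h_{n-1} - \frac{\partial E}{\partial w}(w_{n-1})\|_W \leq \alpha_c \|h_{n-1}\|_W$.

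Next I would compute a Lipschitz constant for $\frac{\partial E}{\partial w}$. Since $\frac{\partial E}{\partial w}(w) = g(z^*(w), w)$ and $z^*$ is $\frac{L_w T}{1-\beta}$-Lipschitz by \eqref{fplip}, applying the Lipschitz properties of $g$ yields
\begin{equation*}
\left\|\tfrac{\partial E}{\partial w}(w_1) - \tfrac{\partial E}{\partial w}(w_2)\right\|_W \leq \left((L_w g) + \tfrac{(L_z g)(L_w T)}{1-\beta}\right) \|w_1 - w_2\|_W =: L\|w_1 - w_2\|_W.
\end{equation*}
By the definition of $\epsilon$ in Theorem \ref{tracking}, $\epsilon = \alpha_\epsilon(1-\alpha_c)/L$, and since $\alpha_\epsilon, (1-\alpha_c) \in (0,1)$, we have $\epsilon \in (0, 1/L)$.

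Finally, because $\alpha_c < 1/2$ by \ref{dynopt}.\ref{dynopt-c}, all hypotheses of Corollary \ref{apxgd} are met with $h_n$ replaced by $h_{n-1} = g(z_n, w_{n-1})$, and the conclusion $\frac{\partial E}{\partial w}(w_n) \to 0$ and convergence of $E(w_n)$ follows. The main obstacle is the bookkeeping around the step-size constraint: one has to verify that the $\epsilon$ prescribed by the time-scale coupling of Theorem \ref{tracking} is in fact small enough to serve as an admissible gradient descent step for $E$, which is precisely what the factor $\alpha_\epsilon(1-\alpha_c)$ accomplishes.
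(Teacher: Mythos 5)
Your proposal is correct and follows essentially the same route as the paper's proof: invoke Theorem \ref{tracking} to get $\|z_n - z_{n-1}^*\|_Z \leq c\|g(z_n,w_{n-1})\|_W$, convert this via the Lipschitz constant $L_z g$ and the definition $c = \alpha_c/(L_z g)$ into the relative-error condition of Corollary \ref{apxgd} with $\alpha_c < 1/2$, and check $\epsilon \leq 1/L$ against the gradient Lipschitz constant $L = (L_w g) + (L_z g)(L_w T)/(1-\beta)$ derived from \eqref{fplip}. Your handling of the index shift (verifying the condition at $w_{n-1}$ with $h_{n-1} = g(z_n, w_{n-1})$) is in fact slightly more careful than the paper's own write-up.
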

\begin{proof}
  Assumptions 1, 2, and 3 guarantee that Theorem \ref{tracking} may be applied. 
Next we show that Proposition \ref{apxgd} may be applied. It is evident that the update step for $w_{n}$ in Algorithm \ref{dynalgo} is of the required form (\ref{wform}),
where $h_{n} = g(z_{n},w_{n-1})$.
We establish that for all $n\geq 1$ the inequality
$\|h_n - \frac{\partial E}{\partial w}(w_n)\| \leq \alpha\|h_n\|$ holds, for some
$\alpha \in [0,1/2)$.
Using inequality \eqref{z-cond} with the Lipschitz of property of $g$,
\begin{equation}\label{h33}
  \|g(z_{n},w_{n-1})-g(z^{*}_{n-1},w_{n-1})\|_{2}
  \leq
  (L_{z}h)c\|g(z_{n},w_{n-1})\|_{2}
\end{equation}
Then, by Assumption\ref{dynopt-g} on $g$, and the definition of $c$, from (\ref{h33}) we obtain
\begin{equation}
  \left\|
  g(z_{n},w_{n-1})
  -
  \frac{\partial E}{\partial w}(w_{n-1})
  \right\|_{2} 
  \leq
  \alpha_{c}\|g(z_{n},w_{n-1})\|_{2}
\end{equation}
Since Assumption \ref{dynopt-c} requires that $\alpha_c < 1/2$, this establishes the feasibility of the directions $h_{n}$.
It remains to show that the step-size is also feasible. Specifically, we need to show that
$
\epsilon \leq  1/L
$,
 where $L$ is any Lipschitz-constant for $ w \mapsto \frac{\partial E}{\partial w}(w)$. Combining  Assumption \ref{dynopt}.\ref{dynopt-g}, with the bound \ref{fplip}, one such $L$ is given by 
\begin{equation}\label{alip}
L = (L_{z}g)\frac{(L_{w}T)}{1-\beta} + (L_{w}g)
\end{equation}
Theorem \ref{tracking} specifies that the step-sizes are defined as
\begin{equation}\label{algoe}
  \epsilon
  = 
  \alpha_{\epsilon}\frac{(1 - \alpha_{c})}
        {(L_{w}g) + (L_{z}g)(L_{w}T)/(1-\beta)}.
\end{equation}
Since $\alpha_{\epsilon}$ and $\alpha_c$ are in the interval $(0,1)$, the condition $\epsilon < 1/L$ holds.
\end{proof}

The utility of Proposition \ref{dynopt} depends on the availability of a dynamical system $T: Z \times W \rightarrow Z$ and function $g : Z \times W \rightarrow W$ that helps in calculating the derivative of the function $E$. As we discuss in the next section, in the case of problem (\ref{minimp}), such a pair can be constructed using adjoint sensitivity analysis.
\section{Persistent Adjoint Method}\label{sec:optapp}
In this section we show how the results of the previous section may be applied in the context of an optimization algorithm for problem (\ref{minimp}). Consider an objective function $e$ on the fixed point $x^{*}(w)$ of a contraction $f(x,w)$ depending on parameter $w$. 
A useful choice for the operator $T$ which can be used in this case can be derived from adjoint
sensitivity analysis. 
In this section we review that construction
and present conditions on the dynamics $f$ and an error function
$e$ which enable the application of Algorithm \ref{dynalgo} for optimizing the function $E(w) = (e \circ x^{*})(w)$. The conditions are essentially uniform contractivity of $f$, together with boundedness of the derivatives of $f$ and $e$. 

The way we introduce the adjoint system of equations is similar to \cite{gilesintro,baldi}. Given the contraction property and differentiability of $f$, the implicit function theorem says we may evaluate the derivative of $  (e \circ x^{*})(w)$ as
\begin{equation}\label{abc-eqn}
  \frac{\partial (e \circ x^{*}) }{\partial w} = ABC
  \end{equation}
where
\begin{align*}
A &= \frac{\partial e}{\partial x}(x^{*}(w)), \\
B &= \left(I - \frac{\partial f}{\partial x}(x^{*}(w),w)\right)^{-1}, \\
C &= \frac{\partial f}{\partial w}(x^{*}(w),w).
\end{align*}
This gives two choices for computing the derivative: in the forward method the product $BC$ is computed and then pre-multiplied by $A$, while in the adjoint method $AB$ is calculated and post-multiplied by $C$. In general these calculations have different costs, depending not only on the dimensions of the relevant matrices and vectors, but also on the details of computing their entries.
Notably, if $f$ is a contraction then we may construct an auxiliary contracting system 
$z_{n+1} = T(z_{n},w)$
to compute either $AB$ or $BC$. In the adjoint case such a $T$ is given in Proposition \ref{adjctr} below. This work is primarily focused on the adjoint formulation, as it tends to be more efficient in situations where there are more parameters than state variables. In the applications we have in mind, this tends to be the case since often the parameter is a $n\times n$ matrix that controls the interactions among $n$ state variables.  To show that the  adjoint system is contracting we use the following result which gives a sufficient condition for the interconnection of contractions to again be a contraction. It is inspired by a result for continuous time systems \cite{russo,sontag}

\begin{prop}\label{dischier}
  Let $T : X \times Y\rightarrow X$ and $U: X \times Y \rightarrow Y$ satisfy the following:
\begin{enumerate}
\item $T$ is a $\beta_{x}$-contraction in $X$ and $L_{y}T$-Lipschitz in $Y$,
\item $U$ is a $\beta_{y}$-contraction in $Y$ and $L_{x}U$-Lipschitz in $X$,
\item $(L_{x}U)(L_{y}T) < (1-\beta_{x})(1-\beta_{y})$. \label{lipcond}
\end{enumerate}
Then for any positive numbers $p_{1}, p_{2}$ such that
\begin{equation}\label{pconstr}
  \max\left\{\beta_{x} + \frac{p_2}{p_1}L_xU,\,
        \beta_{y} + \frac{p_1}{p_2}L_yT\right\} < 1\\
\end{equation}
the map 
$V: X \times Y \rightarrow X \times Y$
where
$V(x,y) = ( T(x,y), U(x,y) )$
is a $\beta$-contraction on the set $Z$ with metric $d_{Z}$ where
$\beta = 
   \max \{\beta_{x} + \frac{p_{2}}{p_{1}}L_{x}U,
           \beta_{y} + \frac{p_{1}}{p_{2}}L_{y}T\}$,
$Z = X \times Y$
and 
$d_{Z}\big( (x_{1},y_{1}) , (x_{2},y_{2}) \big) = 
    p_{1}d_{X}(x_{1},x_{2}) + p_{2}d_{Y}(y_{1},y_{2})$. 
\end{prop}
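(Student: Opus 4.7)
The plan is a direct triangle-inequality argument on the weighted product metric $d_Z$. Fix two points $(x_1,y_1),(x_2,y_2)\in X\times Y$. To bound $d_X(T(x_1,y_1),T(x_2,y_2))$, I would insert the intermediate point $T(x_2,y_1)$ and apply the two one-variable hypotheses on $T$ from condition~1, yielding
\[
d_X(T(x_1,y_1),T(x_2,y_2)) \le \beta_x\, d_X(x_1,x_2) + (L_yT)\, d_Y(y_1,y_2).
\]
The analogous insertion of $U(x_1,y_2)$ together with condition~2 gives
\[
d_Y(U(x_1,y_1),U(x_2,y_2)) \le (L_xU)\, d_X(x_1,x_2) + \beta_y\, d_Y(y_1,y_2).
\]

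Next I would multiply these two estimates by $p_1$ and $p_2$ respectively and add, which regroups the right-hand side as
\[
\bigl(p_1\beta_x + p_2 L_xU\bigr)\, d_X(x_1,x_2) + \bigl(p_1 L_yT + p_2\beta_y\bigr)\, d_Y(y_1,y_2).
\]
Factoring $p_1$ out of the first coefficient and $p_2$ out of the second rewrites this as
\[
p_1\!\left(\beta_x+\tfrac{p_2}{p_1}L_xU\right)d_X(x_1,x_2) + p_2\!\left(\beta_y+\tfrac{p_1}{p_2}L_yT\right)d_Y(y_1,y_2),
\]
which is bounded above by $\beta\bigl(p_1 d_X(x_1,x_2) + p_2 d_Y(y_1,y_2)\bigr) = \beta\, d_Z((x_1,y_1),(x_2,y_2))$, where $\beta$ is the maximum from the proposition's statement. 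Since constraint~\eqref{pconstr} forces $\beta<1$, this is the desired contraction.

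I do not expect a genuine obstacle here; the argument is almost mechanical once the weighted metric is in hand. The only subtle point worth noting is why the hypothesis $(L_xU)(L_yT)<(1-\beta_x)(1-\beta_y)$ in condition~3 is needed: setting $r=p_2/p_1$, the requirement \eqref{pconstr} amounts to $L_yT/(1-\beta_y) < r < (1-\beta_x)/(L_xU)$, and condition~3 is exactly what makes this interval non-empty, so a valid pair $(p_1,p_2)$ exists. The contraction argument itself proceeds uniformly for every admissible choice of weights, which is why the conclusion is stated for any such pair.
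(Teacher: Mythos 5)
Your argument is correct and is essentially identical to the paper's proof: both apply the one-variable Lipschitz bounds via intermediate points, weight by $p_1,p_2$, regroup, and bound the coefficients by the maximum $\beta$. Your closing remark on why condition~3 guarantees a nonempty interval for $p_2/p_1$ matches the observation the paper makes immediately after the proposition.
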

\begin{proof}
  Applying the Lipschitz properties several times and collecting terms yields
  \begin{align*}
    d_{Z}(V(x_{1},y_{1}),V(x_{2},y_{2}))
    &\leq 
    \left(p_{1}\beta_{x} + p_{2}L_{x}U)d_{X}(x_{1},x_{2} \right) +
    \left(p_{2}\beta_{y} + p_{1}L_{y}T)d_{Y}(y_{1},y_{2} \right) \\
    &\leq 
    \Big(\beta_{x} + \frac{p_{2}}{p_{1}}L_{x}U\Big)p_1d_{X}(x_{1},x_{2}) +
    \Big(\beta_{y} + \frac{p_{1}}{p_{2}}L_{y}T\Big)p_2d_{Y}(y_{1},y_{2})  \\
    &\leq
    \max\left\{\beta_{x} + \frac{p_{2}}{p_{1}} L_{x}U ,\,
           \beta_{y} + \frac{p_{1}}{p_{2}} L_{y}T\right\}
           d_{Z}((x_{1},y_{1}), (x_{2},y_{2}) )  \\
           &= \beta            d_{Z}( (x_{1},y_{1}), (x_{2},y_{2}) ) .
\end{align*}
\end{proof}
Note that if the first three conditions of Proposition \ref{dischier} are satisfied, it is always possible to find $p_1,p_2$ so that inequality (\ref{pconstr}) holds. 
The case that is most relevant for our purposes is a hierarchical system, which occurs when one of $L_yT$ or $L_xU$ is zero. For instance, if $L_{y}T$ is zero, contraction can be confirmed using any positive $p_1,\, p_2$ so that 
$\beta_{x} + \frac{p_2}{p_1}L_{x}U < 1$.

The function $f : X \times W \rightarrow X$, that describes the dynamical system, and the objective on the fixed-point $e : X \rightarrow \mathbb{R}$ should satisfy the following:
\begin{asu}\label{adasu}
The function $f$ is a contraction mapping on $X$, uniformly in $w$, and  $f$ and $e$ have continuous derivatives up to order 2, with their first and second derivatives bounded.
  In particular,  the following hold: for all $x_1,x_2 \in X, w_1,w_2 \in W$,
  \begin{align*}
    \left\|\frac{\partial f}{\partial x}(x_1,w_1)\right\|_{X} &\leq \beta_{x}
    <
    1, \\
    \left\|
    \frac{\partial f}{\partial x}(x_1,w_1) - \frac{\partial f}{\partial x}(x_2,w_1)
    \right\|_{X}
    &\leq
    (L_{x^{2}}f)
    \left\|x_1-x_2\right\|_X,
    \\
    \left\|\frac{\partial f}{\partial w}(w_1)\right\|_{2,X}
    &\leq (L_{w}f), \\
    \left\|\frac{\partial f}{\partial w}(x_1,w_1) - \frac{\partial f}{\partial w}(x_1,w_2)\right\|_{2,X}
    &\leq
(L_{w^2}f)\left\|w_1-w_2\right\|_2, \\
\left\|\frac{\partial f}{\partial w}(x_1,w_1) - \frac{\partial f}{\partial w}(x_2,w_1)\right\|_{2,X}
&\leq
(L_{x,w}f)\|x_1-x_2\|_2, \\
\left\| \frac{\partial e}{\partial x}(x_1)\right\|_{X^*}
&\leq (L_{x}e), \\
\left\|\frac{\partial e}{\partial x}(x_1) - \frac{\partial e}{\partial x}(x_2)\right\|_{X^*}
&\leq (L_{x^2}e) \|x_1 - x_2\|_{X}.
 \end{align*}
\end{asu}

We now show that the term $AB$ from \eqref{abc-eqn} maybe calculated as the fixed-point of a hierarchy of contractions. We  use $B_{X^{*}}(k)$ to refer to the ball in the dual norm: $B_{X^{*}}(k) = \left\{ y \in \mathbb{R}^{n} \big\vert \|y\|_{X^*} \leq k\right\}.$
\begin{prop}\label{adjctr}
Let $f,e$ satisfy Assumption \ref{adasu}. Define the set
$Z = 
   X \times 
   B_{X^{*}}(\frac{L_{x}e}{1-\beta_{x}}) 
%\subseteq
%  X \times 
%  \mathbb{R}^{n}
$
   and the function $T^{Adj(f,e)} :  Z \times W \rightarrow Z$ by
   \begin{equation}\label{t-def}
     T^{Adj(f,e)}((x,y),w) =
     \left(
     f(x,w),
     \left(\frac{\partial f}{\partial x}(x,w)\right)^{T}y
     + \frac{\partial e}{\partial x}(x)
     \right)
   \end{equation}
Then there is a norm 
$\|(x,y)\|_{Z} = 
  p_1\|x\|_X + p_2\|y\|_{X^*}$
and $\beta<1$
so that
$T^{Adj(f,e)}$
is a $\beta$-contraction on the set $Z$ in the norm $\|\cdot\|_Z$; it suffices to take
$p_2 = 1$ and
$p_1 = 2\left(\frac{ (L_{x^2}f) (L_{x}e)}{(1-\beta_x)^2} + \frac{(L_{x^2}e)}{1-\beta_x}\right)$ and the contraction coefficient is then
$\beta = (\beta_{x} + 1)/2$.
\end{prop}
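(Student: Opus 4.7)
The plan is to apply Proposition \ref{dischier} to $T^{Adj(f,e)}$ viewed as a hierarchical coupling: the $x$-update is $(x,y)\mapsto f(x,w)$, which does not depend on $y$, while the $y$-update is $(x,y)\mapsto (\partial f/\partial x(x,w))^{T}y + \partial e/\partial x(x)$. In the notation of Proposition \ref{dischier} this means $T$ is the $x$-update (so $L_{y}T=0$) and $U$ is the $y$-update, and we are in the purely hierarchical case described in the remark after Proposition \ref{dischier}.

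Before invoking Proposition \ref{dischier} I first need to verify that $Z$ is forward invariant under $T^{Adj(f,e)}$, since Proposition \ref{dischier} assumes the maps act on a common product space. The $x$-coordinate stays in $X$ by assumption. For the $y$-coordinate, if $\|y\|_{X^{*}} \leq L_{x}e/(1-\beta_{x})$, then using $\|\partial f/\partial x\| \leq \beta_{x}$ and $\|\partial e/\partial x\|_{X^{*}} \leq L_{x}e$ from Assumption \ref{adasu},
\begin{equation*}
\bigl\|(\partial f/\partial x(x,w))^{T}y + \partial e/\partial x(x)\bigr\|_{X^{*}}
\leq \beta_{x}\cdot\frac{L_{x}e}{1-\beta_{x}} + L_{x}e = \frac{L_{x}e}{1-\beta_{x}},
\end{equation*}
which gives invariance.

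Next I collect the Lipschitz constants required by Proposition \ref{dischier}. The $x$-update is a $\beta_{x}$-contraction in $x$ directly from Assumption \ref{adasu}. For the $y$-update, contraction in $y$ follows from linearity: $\|(\partial f/\partial x)^{T}(y_{1}-y_{2})\|_{X^{*}} \leq \beta_{x}\|y_{1}-y_{2}\|_{X^{*}}$, so $\beta_{y}=\beta_{x}$. The Lipschitz constant of the $y$-update in $x$ is obtained by adding and subtracting $(\partial f/\partial x(x_{2},w))^{T}y$, using the invariance bound $\|y\|_{X^{*}}\leq L_{x}e/(1-\beta_{x})$ together with the second-derivative bounds, which yields
\begin{equation*}
L_{x}U \;=\; \frac{(L_{x^{2}}f)(L_{x}e)}{1-\beta_{x}} + (L_{x^{2}}e).
\end{equation*}

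Finally, with $L_{y}T = 0$ the constraint (\ref{pconstr}) reduces to $\beta_{x} + (p_{2}/p_{1})L_{x}U < 1$ in one coordinate and $\beta_{x}<1$ in the other. Setting $p_{2}=1$ and $p_{1} = 2L_{x}U/(1-\beta_{x})$, which is exactly the stated value, produces $\beta_{x} + (p_{2}/p_{1})L_{x}U = \beta_{x} + (1-\beta_{x})/2 = (\beta_{x}+1)/2$, which is the claimed contraction coefficient. The main obstacle is really bookkeeping: making sure the invariance calculation gives a finite ball on which the $y$-Lipschitz constant $L_{x}U$ is bounded, since without that restriction the term $(\partial f/\partial x)^{T}y$ has no uniform Lipschitz constant in $x$. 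Once invariance is in hand, the rest is a direct application of Proposition \ref{dischier}.
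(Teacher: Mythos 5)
Your proposal is correct and follows essentially the same route as the paper's proof: both verify invariance of the ball $B_{X^{*}}(\frac{L_{x}e}{1-\beta_{x}})$, derive the $x$-Lipschitz constant $\frac{(L_{x^{2}}f)(L_{x}e)}{1-\beta_{x}} + (L_{x^{2}}e)$ for the adjoint update using that bound on $\|y\|_{X^{*}}$, and then apply Proposition \ref{dischier} in the hierarchical case $L_{y}T=0$ with the same weights $p_{1},p_{2}$. Your explicit framing of the $L_{y}T=0$ reduction of constraint \eqref{pconstr} is slightly more detailed than the paper's "for instance, the norm ... will suffice," but it is the same argument.
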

\begin{proof}
For a fixed $w$, denote by $T_y$ the map 
$T_y(x,y) =
     \frac{\partial f}{\partial x}(x,w)^{T}y + 
      \frac{\partial e}{\partial x}(x)$.
This map is a $\beta_{x}$-contraction in the norm $\|\cdot\|_{X^{*}}$, since  $\frac{\partial T_{y}}{\partial y} = (\frac{\partial f}{\partial x})^{T}$. In
addition, $T_{y}$ leaves the ball
$B_{X^{*}}(\frac{L_{x}e}{1-\beta_{x}})$ invariant:
If $\|y\|_{X^{*}} \leq (L_xe)/(1-\beta_x)$ then for any $x$,
\begin{align*}
  \|T_y(x,y)\|_{X^*} &=
  \left\| \frac{\partial f}{\partial x}(x,w)^{T}y + 
  \frac{\partial e}{\partial x}(x) \right\|_{X^*} \\
  &\leq
  \beta_x \|y\|_{X^*} + (L_{x}e) \\
  &\leq
    \beta_x \frac{(L_{x}e)}{1-\beta_x} + (L_{x}e)
%  \\ &
%       =  (L_{x}e)\left(\frac{\beta_x}{1-\beta_x} + 1\right)
    =
  \frac{(L_{x}e)}{1-\beta_x}.
  \end{align*}
The Lipschitz property of $T_y$ as a function of $x$ 
follows by the assumption on the 2nd derivatives of $f$ and $e$, and by the assumption that $y$ is bounded.
In particular, we have
\begin{align*}
  \|T_{y}(x_1,y) - T_y(x_2,y)\|_{X*}  &
    =
  \left\|
  \frac{\partial f}{\partial x}(x_1,w)^Ty + \frac{\partial e}{\partial x}(x_1)
  -
  \frac{\partial f}{\partial x}(x_2,w)^Ty - \frac{\partial e}{\partial x}(x_2)
    \right\|_{X^*} \\&
 \leq
  \left\|
  \frac{\partial f}{\partial x}(x_1,w)^Ty -
  \frac{\partial f}{\partial x}(x_2,w)^Ty\right\|_{X^*}\\&\quad+ \left\| \frac{\partial e}{\partial x}(x_1)
  - \frac{\partial e}{\partial x}(x_2) \right\|_{X^*} \\&
   \stackrel{\textbf{A}}{\leq}
  \left\|
  \frac{\partial f}{\partial x}(x_1,w)^T -
  \frac{\partial f}{\partial x}(x_2,w)^T\right\|_{X^*}\frac{(L_{x}e)}{1-\beta_x} \\&\quad+ (L_{x^2}e)\|x_1-x_2\|_{X} \\&
  \stackrel{\textbf{B}}{\leq}
  \left\|
  \frac{\partial f}{\partial x}(x_1,w) -
 \frac{\partial f}{\partial x}(x_2,w)\right\|_{X}\frac{(L_{x}e)}{1-\beta_x} + (L_{x^2}e)\|x_1-x_2\|_{X} \\&
\stackrel{\textbf{C}}{\leq}
  (L_{x^2}f)\|x_1-x_2\|_X\frac{(L_{x}e)}{1-\beta_x} + (L_{x^2}e)\|x_1 - x_2\|_X \\
  &= \left( \frac{ (L_{x^2}f)(L_{x}e)}{1-\beta_x} + (L_{x^2}e)\right)\|x_1-x_2\|_X.
\end{align*}
Step \textbf{A} follows by the bound we have just established for $\|y\|_{X^*}$. Step \textbf{B} follows from the fact that for any matrix $A$, we have  $\|A^T\|_{X^*} = \|A\|_X$, and step \textbf{C} follows by the Lipschitz properties of $\frac{\partial f}{\partial x}$. Then apply Proposition \ref{dischier} to establish the contraction property. For instance, the norm
$\|(x,y)\|_Z = p_1 \|x\|_{X} +  p_2\|y\|_{X^*}$ with $p_2= 1$ and
$$
p_1 = 2\left(\frac{ (L_{x^2}f) (L_{x}e)}{(1-\beta_x)^2} + \frac{(L_{x^2}e)}{1-\beta_x}\right)
$$
will suffice; the resulting contraction coefficient is then $(\beta_x+1)/2$.
\end{proof}

%Note that if a bound on $\|\frac{\partial T_{y}}{\partial x}\|_{X,X^{*}}$ is available, there are many norms one can choose from, by varying $p_{1},p_{2}$ in Proposition \ref{dischier}. This is not investigated further here, although it seems the choice should be important for the overall performance of optimization. 

The following theorem is the main result about the persistent adjoint method, and establishes that Algorithm \ref{dynalgo} may be used to
find a stationary point of the overall function
$E = (e \circ x^*)$
when Assumption \ref{adasu} holds. 
\begin{thm} \label{adjgrad}
Let 
$f : X \times W \rightarrow X$
and
$e:X \rightarrow \mathbb{R}$
satisfy Assumption \ref{adasu},
and define the
norm $\|\cdot\|_Z$ as $\|(x,y)\|_Z = p\|x\|_X + \|y\|_{X^*}$ where
  $p=  2\left(\frac{ (L_{x^2}f) (L_{x}e)}{(1-\beta_x)^2} + \frac{(L_{x^2}e)}{1-\beta_x}\right)$.
Consider Algorithm \ref{dynalgo} with inputs
%$(T,g,
%\delta,\epsilon,
%z_{0},w_{0})$
given by
\begin{enumerate}
\item $T$ is the function $T^{Adj(f,e)}$ defined as in \eqref{t-def},
\item $g$ is the function $g^{Adj(f,e)}$ defined as
  \begin{equation}\label{g-def}
  g((x,y),w) = 
  \left(\frac{\partial f}{\partial w}(x,w)\right)^{T}y,
  \end{equation}
\item \label{adj-c} The constants $\delta,\epsilon$, and $c$ are defined as in Assumption \ref{tracking}.\ref{tracking-coeffs} with $\alpha_c < 1/2, \alpha_\epsilon < 1, \alpha_{\delta} < 1$, and using the following values for the contraction coefficient $\beta$ and Lipschitz constants $L_{w}T, L_{z}g, L_{w}g:$
  \begin{align*}
    \beta &= (\beta_x+1)/2, \\
    L_{w}T &= p(L_{w}f) + (L_{x,w}f)\frac{(L_{x}e)}{1-\beta_x}, \\
  L_{z}g &= \max\left\{ (L_{w}f),\, \frac{(1-\beta_x)(L_{x,w}f)}{2(L_{x^2}f)}\right\}, \\
  L_{w}g &= (L_{w^2}f) \frac{(L_{x}e)}{1-\beta_x}. 
  \end{align*}
  \item The initial point $z_0$ satisfies $\|z_0-z^*\|_Z < (\alpha_c/(L_{z}g))\|g(z_0,w_0)\|_2$. \label{adj-init}
\end{enumerate}
Then Algorithm 1 generates a sequence $w_n$ such that $\frac{\partial (e \circ x^{*})}{\partial w}(w_{n}) \rightarrow  0$. % and $(e\circ x^{*})(w_n)$ converges.
\end{thm}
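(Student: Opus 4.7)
The plan is to invoke Proposition \ref{dynopt} with $E = e\circ x^*$, $T = T^{Adj(f,e)}$, and $g = g^{Adj(f,e)}$. Four conditions must be verified: (i) $T$ and $g$ satisfy the Lipschitz hypotheses of Theorem \ref{tracking}, (ii) the adjoint gradient identity $\frac{\partial E}{\partial w}(w) = g(z^*(w),w)$ holds, (iii) the constants $\epsilon, \delta, c$ satisfy Assumption \ref{tracking}.\ref{tracking-coeffs} with $\alpha_c < 1/2$, and (iv) the initial point $z_0$ is close enough to $z^*_0$. Conditions (iii) and (iv) appear verbatim in the hypotheses of the theorem, so the real work is to establish (i) and (ii).

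For (i), the contraction of $T$ in the weighted norm $\|(x,y)\|_Z = p\|x\|_X + \|y\|_{X^*}$ with coefficient $\beta = (\beta_x+1)/2$, together with invariance of the dual ball $B_{X^*}(L_xe/(1-\beta_x))$ under the $y$-sweep, is already furnished by Proposition \ref{adjctr}. To obtain $L_wT$, I would expand $T((x,y), w_1) - T((x,y), w_2)$ componentwise, use the bound $L_wf$ on $\partial f/\partial w$ for the $f$-part, use the mixed-Lipschitz constant $L_{x,w}f$ on $\partial f/\partial x$ for the adjoint part, and substitute the invariant-set bound $\|y\|_{X^*} \leq L_xe/(1-\beta_x)$; this reproduces the stated formula $L_wT = p(L_wf) + (L_{x,w}f)(L_xe)/(1-\beta_x)$. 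A parallel one-step calculation using $L_{w^2}f$ and the same invariant-set bound gives $L_wg = (L_{w^2}f)(L_xe)/(1-\beta_x)$.

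The main algebraic obstacle is confirming $L_zg$. Splitting the difference $g((x_1,y_1),w) - g((x_2,y_2),w)$ yields an $x$-contribution bounded by $(L_{x,w}f)(L_xe)/(1-\beta_x)\,\|x_1-x_2\|_X$ and a $y$-contribution bounded by $(L_wf)\,\|y_1-y_2\|_{X^*}$. To express both terms as a single multiple of $\|(x_1-x_2, y_1-y_2)\|_Z$, the constant $L_zg$ must satisfy $L_zg \geq L_wf$ and $p\cdot L_zg \geq (L_{x,w}f)(L_xe)/(1-\beta_x)$. The choice $L_zg = \max\{L_wf,\, (1-\beta_x)(L_{x,w}f)/(2L_{x^2}f)\}$ in the theorem statement satisfies the second inequality precisely because $p \geq 2(L_{x^2}f)(L_xe)/(1-\beta_x)^2$, which follows directly from the definition of $p$ in Proposition \ref{adjctr}; pinning down this inequality is the delicate step.

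For (ii), the fixed point $(x^*(w), y^*(w))$ of $T^{Adj(f,e)}(\cdot,w)$ satisfies $x^* = f(x^*,w)$ and $y^* = \bigl(I - (\partial f/\partial x(x^*,w))^T\bigr)^{-1}\partial e/\partial x(x^*)$. Left-multiplying by $(\partial f/\partial w(x^*,w))^T$ gives $g(z^*(w),w) = C^T B^T A^T = (ABC)^T$ with $A,B,C$ as in Equation \eqref{abc-eqn}, which equals $\partial E/\partial w(w)$ as a gradient vector. With (i)--(iv) in hand, Proposition \ref{dynopt} yields $\partial E/\partial w(w_n)\to 0$.
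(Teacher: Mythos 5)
Your proposal is correct and follows essentially the same route as the paper: verify the hypotheses of Proposition \ref{dynopt} by citing Proposition \ref{adjctr} for the contraction of $T^{Adj(f,e)}$, derive $L_{w}T$, $L_{w}g$, and $L_{z}g$ by componentwise estimates using the invariant-ball bound $\|y\|_{X^*}\leq (L_xe)/(1-\beta_x)$, and confirm the adjoint identity $g(z^*(w),w)=\frac{\partial (e\circ x^*)}{\partial w}(w)$ at the fixed point. You also correctly isolate the one delicate step, namely that the stated $L_{z}g$ works because $p \geq 2(L_{x^2}f)(L_xe)/(1-\beta_x)^2$, which is exactly the inequality the paper uses to absorb the $x$-contribution into the weighted norm $\|\cdot\|_Z$.
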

\begin{proof}
  We verify the conditions of Proposition \ref{dynopt}, starting with Assumption \ref{dynopt}.\ref{dynopt-lip}. That $\beta$ is a contraction coefficient for $T^{Adj(f,e)}$ in the norm $\|\cdot\|_Z$ follows from Proposition \ref{adjctr}. The bounds on the Lipschitz constants for $T^{Adj(f,e)}$ and $g$ can be derived from bounds on the Lipschitz constants of $f$ and $e$ as follows.  We begin with the Lipschitz constant $L_{w}T$. Let $z \in Z$ and $w_1,w_2$ be arbitrary. Then
 \begin{align*}
    \|T(z,w_1) - T(z,w_2)\|_Z &=
    p \|f(x,w_1) - f(x,w_2)\|_X 
\hspace{-.1em}    +\hspace{-.1em}
    \left\|\frac{\partial f}{\partial x}(x,w_1)^{T}y\hspace{-.1em} -\hspace{-.1em} \frac{\partial f}{\partial x}(x,w_2)^{T}y\right\|_{X^*} \\
    \leq 
   &\,p(L_{w}f)\|w_1-w_2\|_{2} +
   \left\|
   \frac{\partial f}{\partial x}(x,w_1)^{T}
   -
   \frac{\partial f}{\partial x}(x,w_2)^{T}\right\|_{X^{*},X^*}\|y\|_{X^*} \\
 &   \leq
    p(L_{w}f)\|w_1-w_2\|_{2} +
    \frac{(L_{x}e)}{1-\beta_x}  \left\|\frac{\partial f}{\partial x}(x,w_1) - \frac{\partial f}{\partial x}(x,w_2)\right\|_{X} \\
   % \leq
   % p(L_{w}f)\|w_1-w_2\|_{2} +
   %` \frac{(L_{x}e)}{1-\beta_x}L_{x,w}f\|w_1-w_2\|_2 \\
  &  =
    \left(p(L_{w}f) +  (L_{x,w}f)\frac{(L_{x}e)}{1-\beta_x}\right)\|w_1-w_2\|_2.
  \end{align*}
 Next we treat the Lipschitz constant $L_{w}g$:
  \begin{align*}
    \|g(z,w_1) - g(z,w_2)\|_{2} &=
    \left\|
    \left(\frac{\partial f}{\partial w}(x,w_1)\right)^{T}y -
    \left(\frac{\partial f}{\partial w}(x,w_2)\right)^{T}y
    \right\|_2 \\
    &\leq
      \left\|
      \left(\frac{\partial f}{\partial w}(x,w_1)\right)^{T}-
      \left(\frac{\partial f}{\partial w}(x,w_2)\right)^{T}
      \right\|_{X^*,2}
      \|y\|_{X^*} \\
    &\leq
    \left\|
    \frac{\partial f}{\partial w}(x,w_1)-
      \frac{\partial f}{\partial w}(x,w_2)
      \right\|_{2,X}
    \frac{(L_{x}e)}{1-\beta_x} \\
    &\leq (L_{w^2}f)\|w_1-w_2\|_2\frac{(L_{x}e)}{1-\beta_x}
  \end{align*}
  Finally we consider the Lipschitz constant $L_{z}g$:
  \begin{align*}
    \|g((x_1,y_1),w) - g((x_2,y_2),w)\|_2 =
    \left\|
    \frac{\partial f}{\partial w}(x_1,w)^{T}y_1
    -
    \frac{\partial f}{\partial w}(x_2,w)^{T}y_2\right\|_{2} \\
%    &=
%    \|\frac{\partial f}{\partial w}(x_1,w)^{T}y_1 - \frac{\partial f}{\partial w}(x_1,w)^{T}y_2
%    +
%    \frac{\partial f}{\partial w}(x_1,w)^{T}y_2 \frac{\partial f}{\partial w}(x_2,w)^{T}y_2\|_{2} \\
     %&
       \leq
        \left\|
        \frac{\partial f}{\partial w}(x_1,w)^{T}\left(y_1 -y_2\right)
        \right\|_2
%    \\&\quad
        +
    \left\|
         \left(\frac{\partial f}{\partial w}(x_1,w)^{T}
         -
         \frac{\partial f}{\partial w}(x_2,w)^{T}\right)
    y_2
    \right\|_{2} \\
%     &
       \leq
        \left\|\frac{\partial f}{\partial w}(x_1,w)^{T}\right\|_{X^*,2}
        \|y_1 - y_2\|_2
    % \\&\quad
          +
    \left\|
          \frac{\partial f}{\partial w}(x_1,w)^{T}
          -
          \frac{\partial f}{\partial w}(x_2,w)^{T}
          \right\|_{X^*,2}
    \|y_2
        \|_{X^*} \\
  %  &\leq
  %      \|\frac{\partial f}{\partial w}(x_1,w)\|_{2,X}\|y_1 - y_2\|_{X^*}
  %      \\&\quad+
  %  \|
  %  \frac{\partial f}{\partial w}(x_1,w)-\frac{\partial f}{\partial w}(x_2,w)\|_{2,X} 
 %   \frac{(L_{x}e)}{1-\beta_x} \\
%     &
       \leq
     (L_{w}f)\|y_1 - y_2\|_{X^*}
    +
    (L_{x,w}f)\|x_1-x_2\|_X
      \frac{(L_{x}e)}{1-\beta_x}. %\\
  \end{align*}
  The last term on the right may be bounded as follows:
  \begin{align*}
         (L_{w}f)\|y_1 - y_2\|_{X^*}
    &+
    (L_{x,w}f)\|x_1-x_2\|_X
      \frac{(L_{x}e)}{1-\beta_x} 
    \\&\leq (L_{w}f)\|y_1-y_2\|_{X^*} + (L_{x,w}f)\frac{(1-\beta_x)}{2 L_{x^2}f} 2\frac{(L_{x^2}f)(L_{x}e)}{(1-\beta_x)^2}\|x_1-x_2\|_X
\\&        \leq (L_{w}f)\|y_1-y_2\|_{X^*} + (L_{x,w}f)\frac{(1-\beta_x)}{2 L_{x^2}f} p\|x_1-x_2\|_X  \\&
    \leq
    \max\left\{ (L_{w}f) \, , \, (L_{x,w}f)\frac{(1-\beta_x)}{2 L_{x^2}f} \right\}\|z_1-z_2\|_{Z} 
  \end{align*}
  
Next, we confirm Assumption \ref{dynopt}.\ref{dynopt-g}. We show that at the equilibrium $z^*$, the function $h$ evaluates to the derivative of the objective function $E = (e \circ x^{*})$.
By definition of $h$ and by the construction of $T$, at the fixed-point $z^* = (x^{*},y^{*})$ we have
\begin{align*}
h(z^*(w),w) &=  
  \left(\frac{\partial f}{\partial w}(x^{*},w)\right)^{T}y^{*} \\
&=
 \left(\frac{\partial f}{\partial w}(x^{*},w)\right)^{T} 
 \left(I - \frac{\partial f}{\partial x}(x^{*}(w),w)\right)^{-T} 
 \left(\frac{\partial e}{\partial x}(x^{*}(w))\right)^T \\
&= \frac{\partial( e \circ x^*)}{\partial w}(w)
\end{align*}
Assumption \ref{dynopt}.\ref{dynopt-c} and \ref{dynopt}.\ref{init-point-asu} follow directly from our Assumptions \ref{adjgrad}.\ref{adj-c} and \ref{adjgrad}.\ref{adj-init}, respectively.
This completes the proof.
\end{proof}
This theoretical result justifies the applications of the persistent adjoint method that we explore in the next section, where we consider an application to  a model fitting problem in chemical kinetics, and an application to learning in attractor networks.
\section{Numerical Experiments}\label{sec:appl}
In this section, we present two examples of how the persistent adjoint method can be applied. In the first example, we consider an inverse problem in chemical kinetics, building on an algorithm for finding equilibira first presented in \cite{gnacadja} and \cite{vandorp}. The task is to compute a matrix of reaction rates among chemical species that is compatible with observed reaction data. The data that is observed is the concentrations of the various species at (approximate)  equilibrium, corresponding to varying proportions of the initial concentrations. An iterative algorithm for computing the equilibrium concentrations serves as the function $f$ that is  used as the basis for computing the derivatives in the adjoint method. We present some numerical results using data generated from synthetic reaction networks. The second example concerns an attractor neural network model. An attractor network consists of a network of simple processing units. Unlike feed-forward networks, the connectivity graph of an attractor network may have cycles. Under certain conditions, iterating the dynamical rule of the network results in convergence to a unique fixed-point, and the problem we consider is to find the weights of the network that produce a desired set of fixed-points for a set of inputs to the individual units.
\subsection{Fixed-point iteration for chemical equilibira}
In this subsection we consider an application to chemical kinetics.
We restrict ourselves to a certain class of chemical networks known as heterodimerization networks. In a heterodimerization network there are $n$ \textit{simple species}, denoted $X_1,\hdots,X_n$, and for each pair of simple species there is a corresponding \textit{complex species} $X_{i,j}$, where $i\neq j$, and $X_{i,j} = X_{j,i}$.  There are two types of reactions that  occur in a heterodimerization network.
First,  pairs of simple species $X_{i}, X_{j}$
combine to form the corresponding complex species
$X_{\{i,j\}}$ at rate $e^{w_{i,j}}$, written in symbolic form as
\begin{equation}\label{chem1}
X_{i} + X_{j} \xrightarrow{\makebox[1cm]{$e^{w_{i,j}}$}} X_{\{i,j\}}.
\end{equation}
The second type of reaction is that complex species degrade at unit rate into their constituents:
\begin{equation}\label{chem2}
X_{\{i,j\}} \xrightarrow{\makebox[1cm]{$1$}} X_{i} + X_{j}.
\end{equation}
Continuous time mass-action kinetics \cite{anderson} specifies a flow on the concentration variables $\{x_{i}\}, \{x_{\{i,j\}}\}$, from the formal equations (\ref{chem1}, \ref{chem2}). These equations are as follows:
\begin{align*}
  \frac{d x_i}{d t}(t) &=  \sum\limits_{j: \{i,j\} \in D}x_{\{i,j\}}(t)
  -\sum\limits_{j:\{i,j\} \in D} e^{w_{i,j}}x_{i}(t)x_j(t), \\
  \frac{d x_{\{i,j\}}}{d t}(t) &= -x_{\{i,j\}}(t) + e^{w_{i,j}}x_{i}(t)x_{j}(t).
  \end{align*}
where $D$ is the set pairs of simple species which react with each other. These equations imply that, at equilibrium, the concentrations must satisfy
\begin{align}
  \sum\limits_{j:\{i,j\} \in D}e^{w_{i,j}}x_{i}x_{j} &= \sum\limits_{j:\{i,j\} \in D}x_{\{i,j\}},  \notag \\
  x_{\{i,j\}} &= e^{w_{i,j}}x_{i}x_{j}. \label{complex-from-simple}
  \end{align}
In particular, \eqref{complex-from-simple} implies that the equilibrium concentrations of the complex species can be computed from the equilibrium concentrations of simple species.
 Furthermore, there is a conservation law that says
$b_{i}(t) = x_{i}(t) + \sum_{j:\{i,j\} \in D}x_{\{i,j\}}(t)$ remains constant. 
%Reaction networks of this type have a unique equilibrium concentration for every vector of ``total concentrations'' (the vector $b$ determines the total concentration, which is an invariant of the concentration trajectory under mass-action kinetics). 

 It was shown in \cite{gnacadja} (see also \cite{vandorp}) that the components of the equilibrium for the simple species may be calculated as the fixed-point of the map
$F:\mathbb{R}_{> 0}^{N} \rightarrow \mathbb{R}_{> 0}^{N}$ given by
\begin{equation}\label{big-f}
  F_{i}(x) =
   \frac{b_i}
        { 1 + \sum\limits_{j:\{i,j\} \in D}x_{j}e^{w_{i,j}} }
        \end{equation}
A rate of convergence for the fixed-point iterations can be derived as well; \cite{gnacadja} showed that the map $F$ is a contraction in the Thompson metric 
$d(u,v) = \| \log u - \log v \|_{\infty}$.
The equilibrium concentration depends on the reaction rates $w$ and the total-concentrations $b$.

%In the inverse problem we consider there is a fixed but unknown reaction rate matrix that we wish to recover  using observations of the reactions.
%Specifically, we assume that for several settings of the initial concentrations
%the corresponding equilibrium concentrations are observed and serve as constraints on $w$. We use optimization to find a $w$ that is compatible with these observations 
For our purposes, it will be more useful to work with a function closely related to $F$, that is obtained by working with the log-space of chemical concentrations.
Formally, denote by $\operatorname{Sym}_{n}(\mathbb{R})$ the set of $n \times n$ symmetric matrices with real-valued entries. For a vector $b \in \mathbb{R}^{n}$ define
$\mathbb{R}^{n}_{\leq b} = \{x \in \mathbb{R}^{n} | x_{i} \leq b_{i}, 1 \leq i \leq n\}$.
Let $X = \mathbb{R}^{n}_{\leq b}$, $W =  \operatorname{Sym}_{n}(\mathbb{R})$ and define the function 
$f(\cdot,\cdot\,;b) : X  \times W \rightarrow X$  as follows. The $n$ component functions $f_{i} : X \times W \rightarrow \mathbb{R}_{\leq b_{i}}$ are given by
\begin{equation}\label{crnctrc}
  f_{i}(x,w;b) =  b_{i} -\log\bigg(1+\sum\limits_{j\neq i}^{n}e^{w_{i,j} + x_{j}}\bigg)
\end{equation}
%The set $D$ can be thought of as a set of edges on a graph with $n$ nodes, with the constraint that their are no self-loops: $\{i,i\} \notin D$ for any $i$.
%The function $f$ in equation (\ref{crnctrc}) is related to the $F$ of equation \ref{big-f} by
%$f = \log \circ F \circ \exp.$
The $b_{i}$ represent the (logarithm of) the total concentration for species $i$.
The next result says that the function $f$ is a contraction for all values of $b,w$. 
\begin{prop}\label{crnprop}
  For any $n\times n$ matrix $w$ and any
  $b\in \mathbb{R}^{n}$ the map $f$ defined in \eqref{crnctrc} is a contraction in the norm
  $\|\cdot\|_{\infty}$ on the set
  $\mathbb{R}^{n}_{\leq b} = \{x \in \mathbb{R}^{n} | x_{i} \leq b_{i}, 1 \leq i \leq n\}$.
  A contraction coefficient is $\beta_x = \frac{M}{1+M},$ with $M$ defined as
  $$ M = \bigg(\max_{1\leq i\leq n}\sum\limits_{j\neq i}^{n}e^{w_{i,j}}\bigg)\|e^b\|_{\infty}. $$
\end{prop}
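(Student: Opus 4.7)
The plan is to bound the operator norm of the Jacobian $\partial f/\partial x$ uniformly on $\mathbb{R}^n_{\leq b}$ and then invoke the mean-value inequality on the convex set $\mathbb{R}^n_{\leq b}$ to promote this pointwise bound to the global Lipschitz (contraction) bound in $\|\cdot\|_\infty$.

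First, I would compute the partial derivatives. Differentiating \eqref{crnctrc} gives $\partial f_i/\partial x_i = 0$ and, for $k\neq i$,
\[
\frac{\partial f_i}{\partial x_k}(x,w;b)
= -\frac{e^{w_{i,k}+x_k}}{1+\sum_{j\neq i} e^{w_{i,j}+x_j}}.
\]
Since the induced $\ell_\infty\to\ell_\infty$ operator norm of a matrix is the maximum absolute row sum, setting $S_i(x)=\sum_{j\neq i}e^{w_{i,j}+x_j}$, we get
\[
\Bigl\|\tfrac{\partial f}{\partial x}(x,w;b)\Bigr\|_\infty
= \max_{1\le i\le n}\sum_{k\neq i}\frac{e^{w_{i,k}+x_k}}{1+S_i(x)}
= \max_{1\le i\le n}\frac{S_i(x)}{1+S_i(x)}.
\]

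Next, I would bound $S_i$ uniformly over $x\in\mathbb{R}^n_{\leq b}$: since $x_j\le b_j$,
\[
S_i(x) \le \sum_{j\neq i}e^{w_{i,j}}e^{b_j}
\le \|e^b\|_\infty\sum_{j\neq i}e^{w_{i,j}} \le M.
\]
Because $s\mapsto s/(1+s)$ is monotonically increasing on $[0,\infty)$, this yields
$\|\partial f/\partial x\|_\infty \le M/(1+M) = \beta_x$ pointwise on $\mathbb{R}^n_{\leq b}$.

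Finally, I would observe that $\mathbb{R}^n_{\leq b}$ is convex, so for any $x,x'$ in this set the segment $\{tx+(1-t)x':t\in[0,1]\}$ lies in the domain, and the standard mean-value inequality for differentiable maps between normed spaces gives
\[
\|f(x,w;b)-f(x',w;b)\|_\infty \le \sup_{t\in[0,1]}\Bigl\|\tfrac{\partial f}{\partial x}(tx+(1-t)x',w;b)\Bigr\|_\infty\,\|x-x'\|_\infty \le \beta_x\|x-x'\|_\infty.
\]
This is the claimed contraction. There is no serious obstacle; the only minor point to verify is that $f$ actually maps $\mathbb{R}^n_{\leq b}$ into itself, which follows because $\log(1+\sum_{j\neq i}e^{w_{i,j}+x_j})\ge 0$ and hence $f_i(x,w;b)\le b_i$. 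The computation of the row sums of the Jacobian is the only quantitative step, and it telescopes cleanly through the identity $\sum_k e^{w_{i,k}+x_k}/(1+S_i)=S_i/(1+S_i)$.
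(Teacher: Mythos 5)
Your proof is correct and follows essentially the same route as the paper: compute the Jacobian, identify the induced $\|\cdot\|_\infty$ operator norm as the maximum row sum $S_i(x)/(1+S_i(x))$, bound $S_i(x)\le M$ using $x_j\le b_j$, and conclude via monotonicity of $s\mapsto s/(1+s)$. You are in fact slightly more careful than the paper, since you explicitly invoke convexity of $\mathbb{R}^n_{\leq b}$ with the mean-value inequality and verify that $f$ maps the set into itself, steps the paper leaves implicit.
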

\begin{proof}
The derivative of $f$ with respect to $x$ is
\[
\frac{\partial f_{i}}{\partial x_{j}}(x,w;b) =
\begin{cases}
  0 &\text{ if } j = i, \\
 -\dfrac{e^{w_{i,j} + x_{j}}}
{1 + \sum_{k\neq i}^{n}e^{w_{i,k} + x_{k}}} &\text{ if } j \neq i. \end{cases}
\]
The matrix norm induced by the vector norm $\|\cdot\|_{\infty}$ is the maximum-row-sum. Specifically, we have
\begin{align*}
  \left\|
  \frac{\partial f}{\partial x}(x,w;b)
  \right\|_{\infty} =
\max_{1\leq i\leq n}
  \frac{\sum_{j\neq i}^{n}e^{w_{i,j}  + x_{j}}}
       {1 + \sum_{j\neq i}^{n}e^{w_{i,j}  + x_{j}}} 
\end{align*}
By the definition of $f$, we may assume each
$x_{i} < b_{i}$. This means that the norm of the derivative can be bounded from above by
$\left\|\frac{\partial f}{\partial x}(x,w;b)\right\| \leq \frac{M}{1 + M}$
where
$$M =
\bigg(\max_{1\leq i\leq n}\sum\limits_{j\neq i}^{n}e^{w_{i,j}}\bigg)\|e^b\|_{\infty}.$$
\end{proof}

In light of this contraction result, the function $f$ is guaranteed to have a unique fixed-point for each $w$ and $b$. The fixed-point determined by a specific $w$ and $b$ is denoted $x^{*}(w;b)$.
\subsubsection{Problem formulation}\label{prob-form}
Let
$b^1,\hdots,b^m$
be a set of initial (log) concentration vectors, and let
$\widetilde{x}^1$, $\hdots$, $\widetilde{x}^m$ be the corresponding observed equilibrium concentrations. Optimization seeks to identify a rate matrix $w$ such that
$f( \widetilde{x}^i, w; b^i) =  \widetilde{x}^i$ for each $i=1,2,\hdots,m$. This can be expressed
%as
%\begin{equation}\label{full-problem}
%  \begin{split}
%  \min_{w\in \sym_n}&\frac{1}{m}\sum\limits_{i=1}^{m}\|x^i - \widetilde{x}^i\|^2
%  \\
%  \text{ subject to }
%  &x^1 = f(x^1,w;b^1), \\
%  &x^2 = f(x^2,w;b^2), \\
%  &\quad\quad\vdots \\
%  &x^m = f(x^m,w;b^m).
%  \end{split}
%\end{equation}
%We can express this problem
in the form \eqref{minimp} as follows.
Define the functions
$f^{\operatorname{Total}} :X^{10} \times W \to\ X^{10}$ and $e^{\operatorname{Total}} : X^{10} \to \reals$ as :
\begin{align}
f^{\operatorname{Total}}((x^1,\hdots,x^{10}),w)
&=
\left(
f(x^{1},w;b^1),\hdots, f(x^{10},w;b^{10})
\right), \label{f-total}
\\
e^{\operatorname{Total}}(x^{1},\hdots,x^{10})
&= \frac{1}{m}\sum\limits_{i=1}^{m}\|x^i - \widetilde{x}^i\|^{2}_{2}. \label{e-total}
\end{align}
The optimization problem can then be stated as
\begin{equation}\label{equiv-problem}
  \begin{split}
  \min\limits_{w \in \sym_n} &e^{\total}(x^1,\hdots,x^{10})\\
  \text{ subject to }
  &(x^1,\hdots,x^{10}) = f^{\total}((x^1,\hdots,x^{10}),w).
  \end{split}
\end{equation}

\begin{figure}[t]
  \centering
  \begin{subfigure}{%{0.34\linewidth}
    \includegraphics[width=0.34\linewidth]{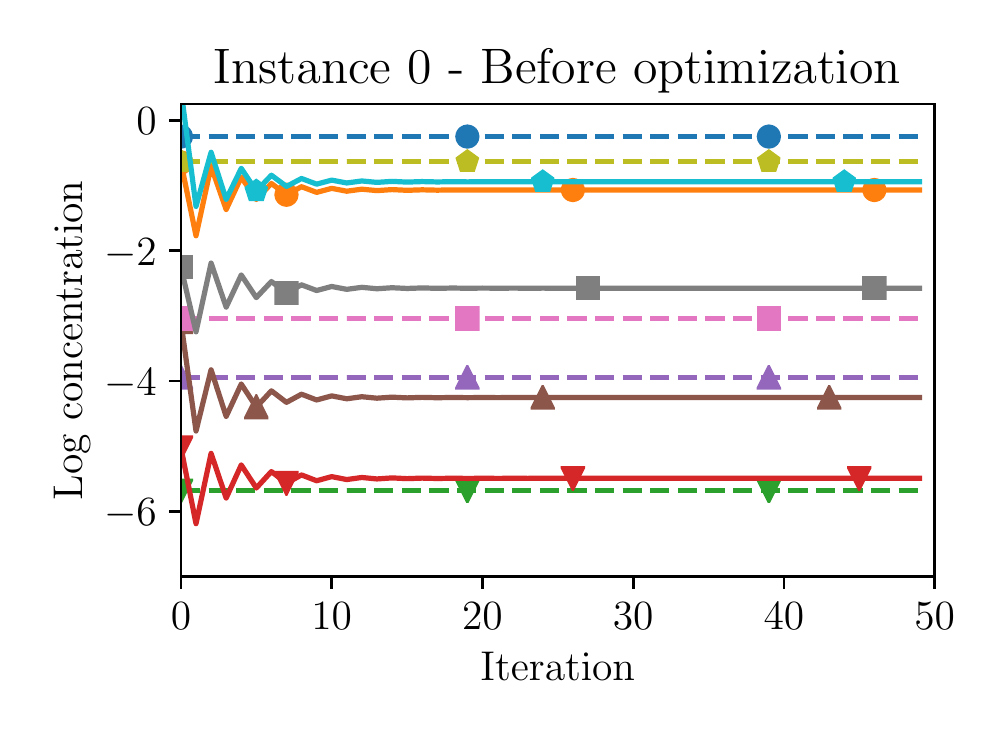}}
    %\caption*{a}
  \end{subfigure}
  \begin{subfigure}{\hspace{-1em}
    \includegraphics[width=0.33\linewidth]{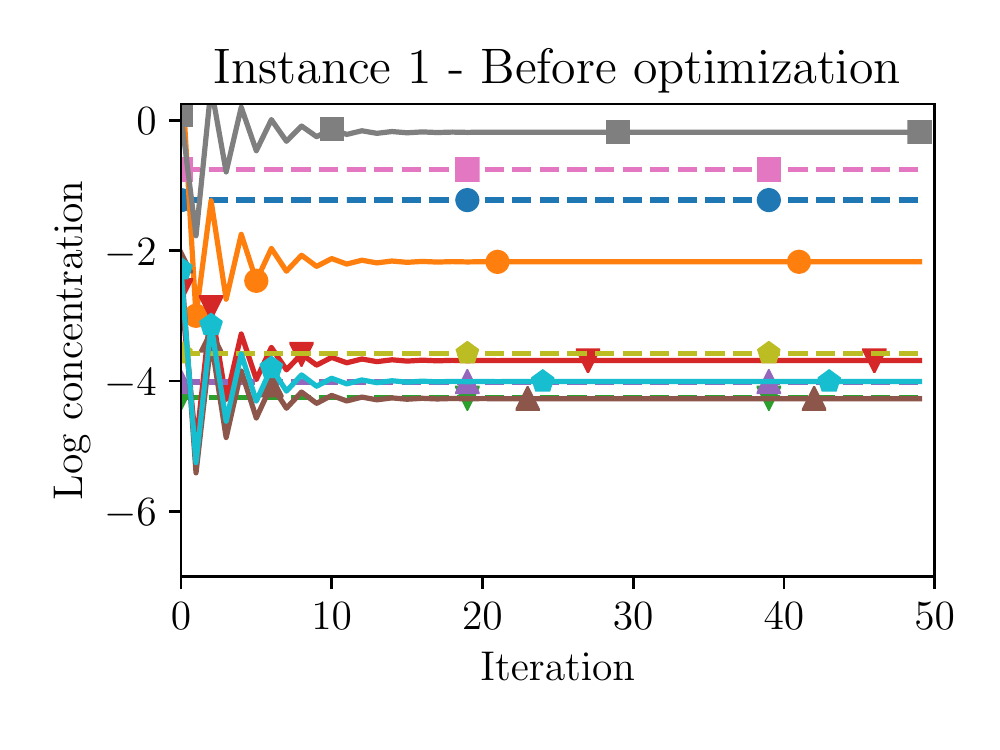}}
  \end{subfigure}
  \begin{subfigure}{\hspace{-1em}
    \includegraphics[width=0.33\linewidth]{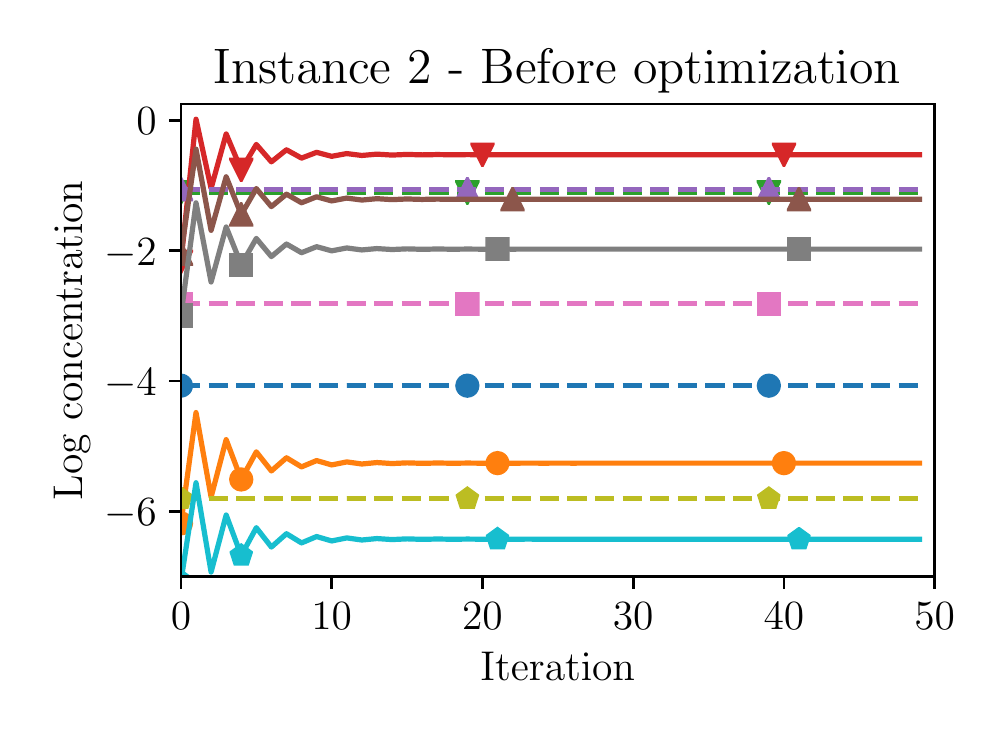}}
  \end{subfigure}  \\
  \begin{subfigure}{
    \includegraphics[width=0.34\linewidth]{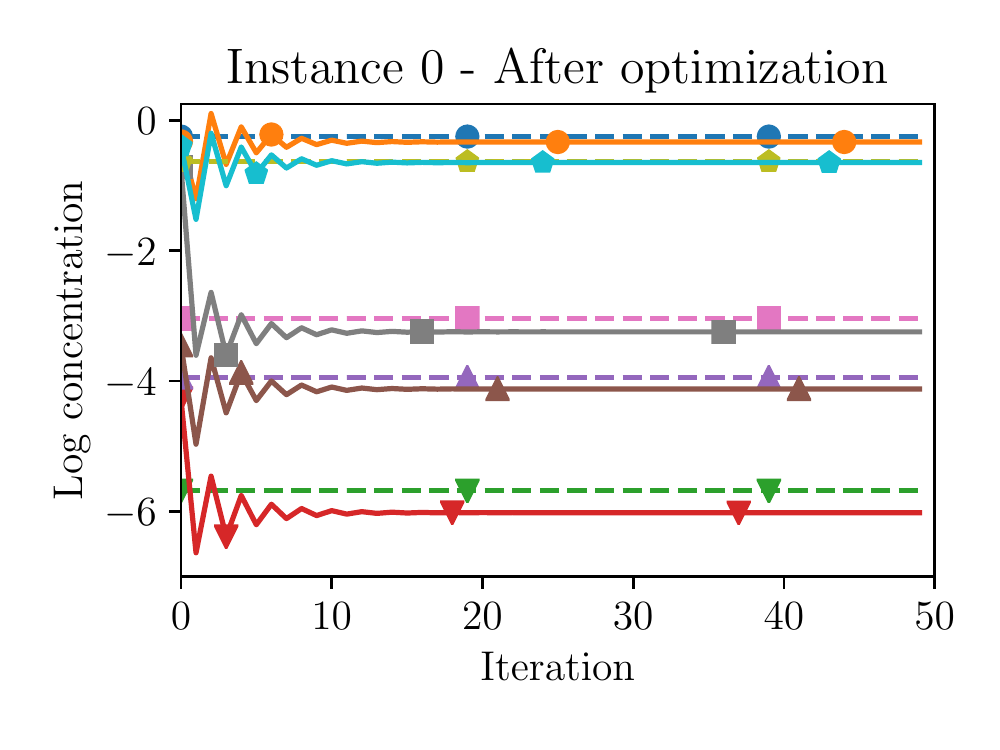}}
  %  \caption*{b}
  \end{subfigure}
  \begin{subfigure}{\hspace{-1em}
    \includegraphics[width=0.33\linewidth]{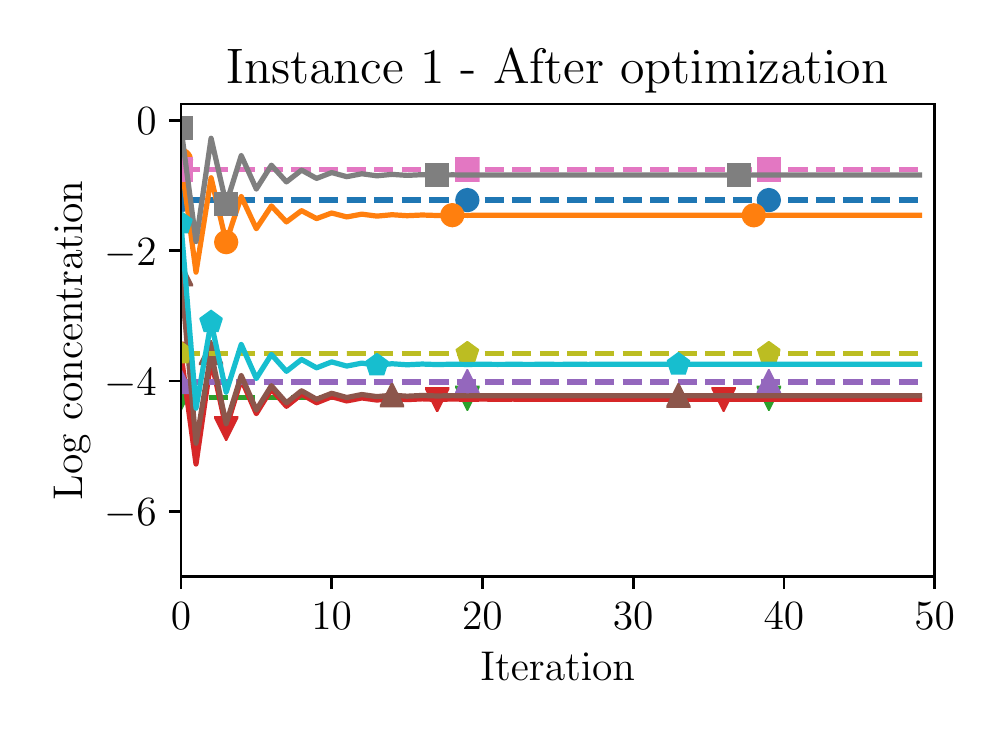}}
 %   \caption*{f}
  \end{subfigure}
  \begin{subfigure}{\hspace{-1em}
    \includegraphics[width=0.33\linewidth]{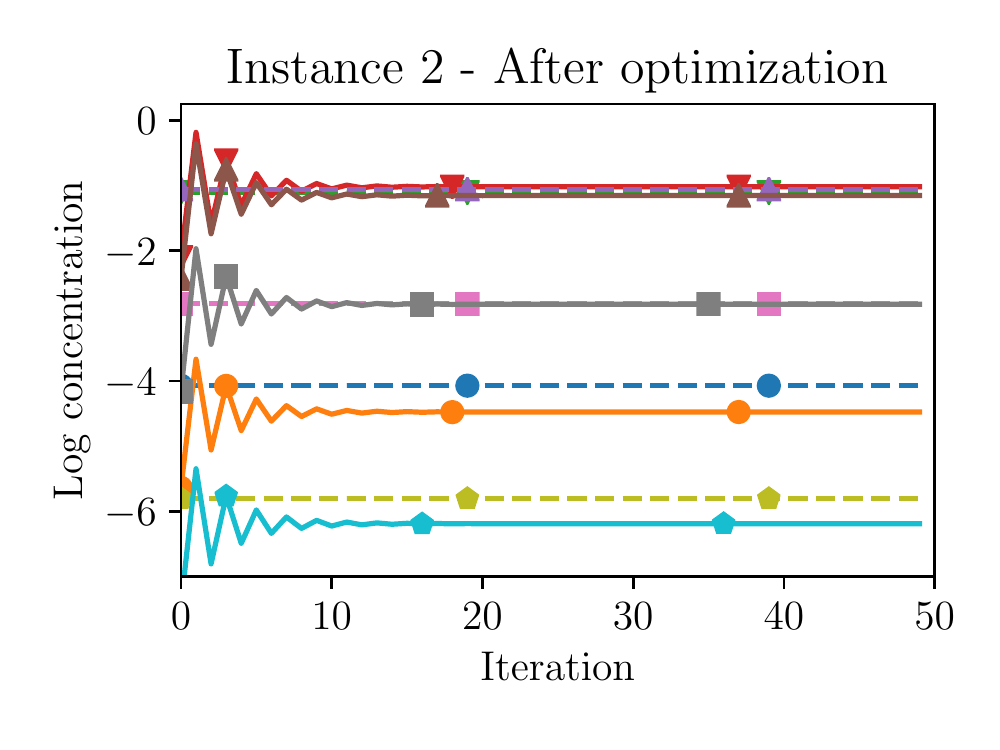}}
%    \caption*{j}
  \end{subfigure}  
  
  % \end{tabular}
  \caption{Trajectories generated in response to different inputs before (top row) and after (bottom row) optimization. The dashed lines represent the target concentrations. }\label{fig:trajectories}

\end{figure}

Note that $f^{\Total}$ can be viewed as a parallel combination 
of $m$ instances of the system \eqref{crnctrc},
each subsystem receiving different inputs in the form of total concentrations.
Parallel combinations of contractions are also contractions. Formally, a recursive application of Proposition \ref{dischier} shows that for all parameters $w$ the function $f^{\Total}$ is a contraction in the norm $\|\cdot\|_X$, defined as 
\begin{equation}\label{the-norm}
  \|(x_{1}, x_{2}, x_{4}, x_{4})\|_{X} = \sum\limits_{1\leq i \leq 4}\|x_{i}\|_{\infty}.
  \end{equation}
A contraction coefficient relative to the norm \eqref{the-norm} is
$\beta_{x}^{\Total}$, defined as   $  \beta_{x}^{\Total} = \frac{M^{\Total}}{1+M^{\Total}},$ where $M^{\Total}$ is 
\begin{equation}\label{pllctr}
 M^{\Total}
  =
  \bigg(\max_{1\leq i\leq n}\sum\limits_{j\neq i}^{n}e^{w_{i,j}}\bigg)\max_{1\leq i \leq m}
  \left\|e^{b^i}\right\|_{\infty}.
\end{equation}
Note also that in this case the dual norm is given by $\|(x_{1},x_{2},x_{3},x_{4})\|_{X^{*}} = \max_{1\leq i\leq4}\|x_{i}\|_{1}$.

Proposition \ref{crnprop} together with the results in Section \ref{sec:optapp} suggest that the persistent adjoint method can be used to find an approximate solution to the problem \eqref{equiv-problem}. Those results use the assumption that the convergence rate and various bounds on derivatives of $f$ can be bounded independently of $w$, but Proposition \ref{crnprop} suggests that the Lipschitz constant of $f$ with respect to $x$ can go to 1 as 
$\|w\| \rightarrow \infty$.
This could be addressed by performing optimization on a constrained class of models for which the various derivatives remain bounded, but for simplicity optimization was performed on the plain unconstrained model. Values for the parameters $\epsilon$ and $\delta$ which define the step-size and time-scale in Algorithm \ref{dynalgo} were determined experimentally.
\begin{figure}[t]
\begin{center}
  \begin{subfigure}{ \includegraphics[width=.5\linewidth]{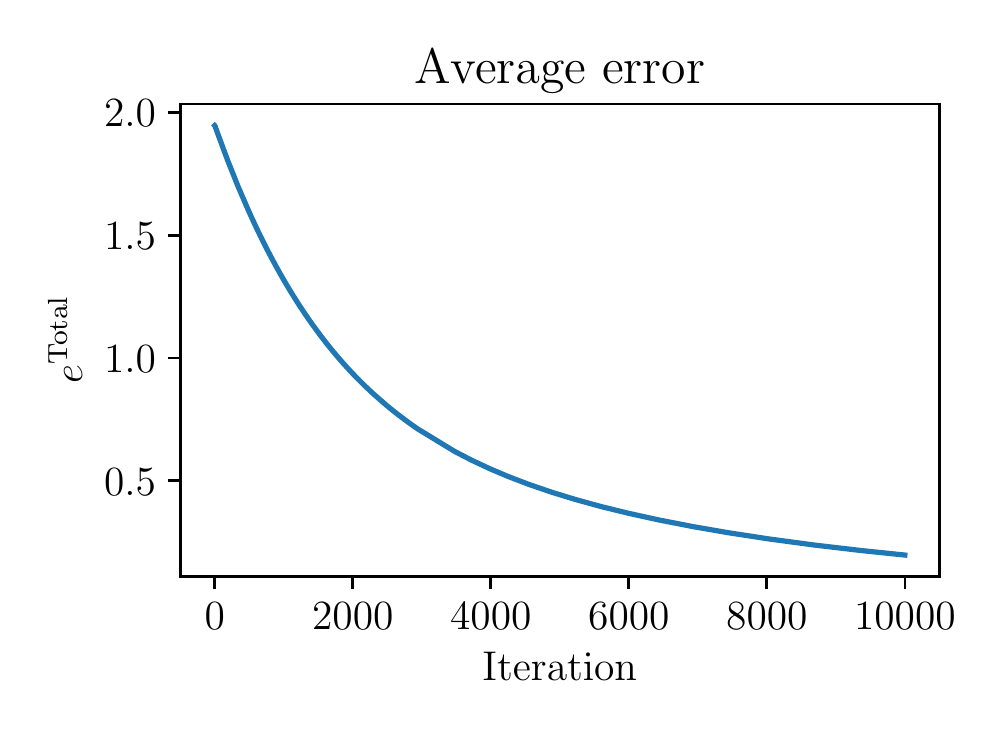}}
  \end{subfigure}
  \begin{subfigure}{\hspace{-1em}
  \includegraphics[width=.5\linewidth]{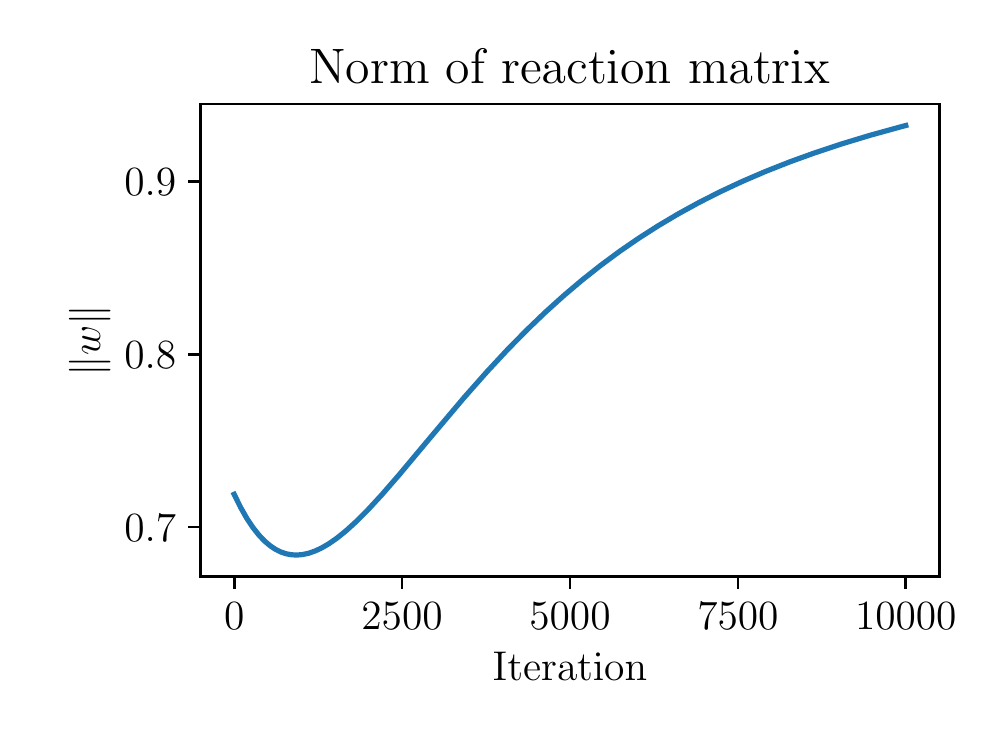}}
  \end{subfigure}
  \begin{subfigure}{    \includegraphics[width=.5\linewidth]{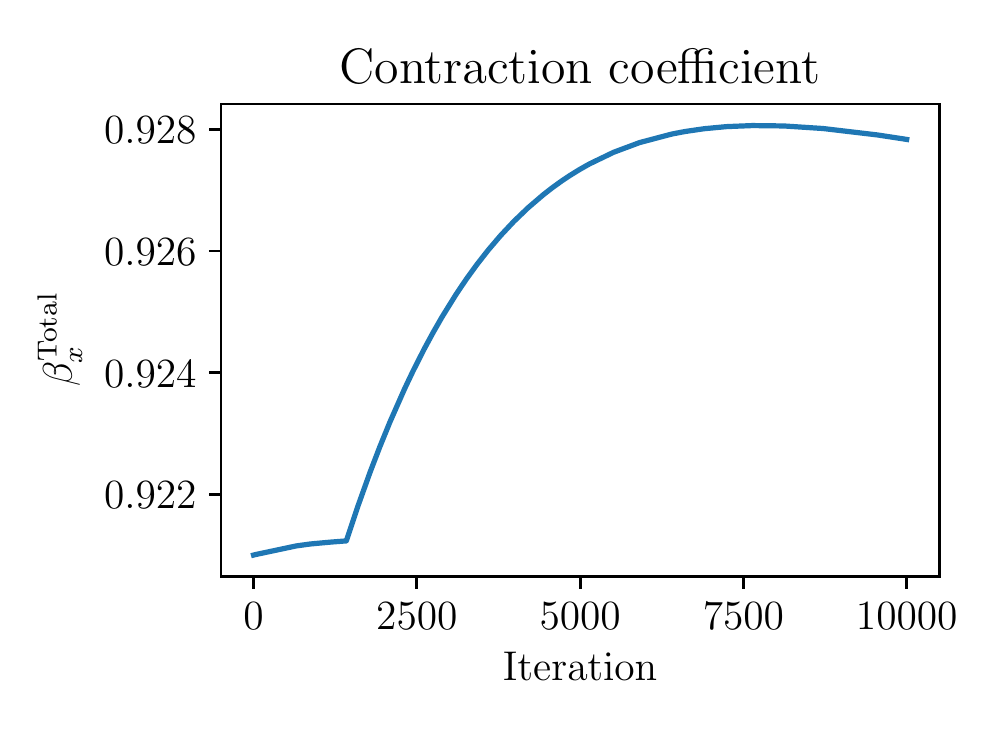}}
  \end{subfigure}
  \begin{subfigure}{\hspace{-1em}
 \includegraphics[width=.5\linewidth]{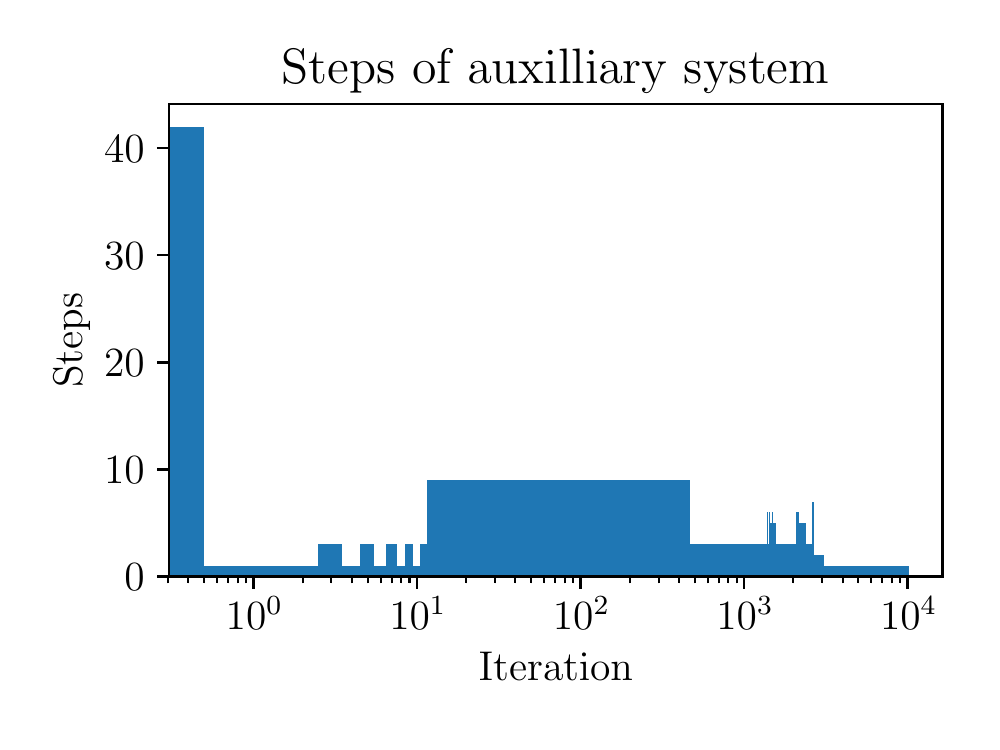}}
  \end{subfigure}
\caption{Upper left: The error over time. This is the function $e^{\total}$, defined at \eqref{e-total}, evaluated at approximate equilibrium concentrations at each iteration. Upper right: The norm of the reaction matrix after each step of optimization. Note that this is the Euclidean (sum-of-squares) norm. Lower left: The contraction coefficient of the solver for fixed-points of the chemical system as optimization progresses. Lower right: The number of auxiliary steps required between update steps. }\label{fig:epersist}
\end{center}
\end{figure}

\subsubsection{Optimization results}
Data was generated for our example problem as follows. We let $\N$ denote the normal distribution with mean zero and unit variance.
We begin with a random rate matrix $w$, whose entries are sampled from $\N$.
Then we generate 10 random total concentration vectors $b^1, \hdots, b^{10}$,  whose entries are also from $\N$.
For each concentration vector we compute (approximate) equilibrium concentration vectors $\widetilde{x}^{1} , \hdots, \widetilde{x}^{10}$. We  obtain the starting point for optimization by defining $w^{0}$ as a random matrix with entries sampled from $\N$.

Algorithm 1 was run with the following inputs:
\begin{itemize}
\item
  $
  T = T^{Adj(f^{\total},\, e^{\total})},
  g = g^{Adj(f^{\total},\, e^{\total})}$ with $f,e$  as in \eqref{f-total}, \eqref{e-total}, resp.,
\item
  $ \epsilon = 0.4, \delta = 0.01 $
 \item
   $z_{0} = ((x^1,\hdots,x^{10}),(y^1,\hdots,y^{10}))$ is set to $(0,0) \in \mathbb{R}^{100}$
 \item
   $\|\cdot\|_W$ is the Frobenius norm: $\|w\|_W = ( \sum_{i=1}^{n}\sum_{j=1}^{n}w_{i,j}^2)^{1/2}$.
   \item
$ \| \cdot \|_{Z} $ is $\|(x,y)\|_{Z} = \|x\|_{X} + \|x\|_{X^{*}}$ \quad (See  \eqref{the-norm}.)
\end{itemize}

The persistent adjoint method was run with the settings above for $\num{50000}$ iterations, and we present some results of the optimization in Figures \ref{fig:trajectories} and \ref{fig:epersist}. We begin with a qualitative picture of the results in Figure \ref{fig:trajectories}. The first row shows how three of the inputs drive the system \eqref{crnctrc} before training, when the parameters are initialized randomly. Each columns corresponds to a different input, and one can see that initially the equilibrium concentrations  do not converge to the the targets, shown in the dashed line. The second row shows how the base system responds to each input after training. We see that each input produces an equilibrium concentration that is nearly equal to the target concentration.

Figure \ref{fig:epersist} shows a quantitative picture of optimization performance, by tracking several properties of the optimization trajectory.
The upper left plot of Figure \ref{fig:epersist} shows  $E$ as training progresses. For each $n$, the error $E(w_{n})$ was estimated by first iterating $f^{\total}$ until a convergence threshold was reached, and then computing $e^{\total}(x)$ on the resulting estimate of the fixed-point of the system.  The upper right plot of Figure \ref{fig:epersist} shows how the norm of the reaction matrix evolves during the training.  During optimization, we calculated the bound on the contraction coefficient (given prior to Equation \eqref{pllctr} above), and the result is shown in Figure  \ref{fig:epersist}, lower left.  In the persistent adjoint method, the time spent in gradient estimation varies across iterations, and we track this quantity as well. This is shown in Figure \ref{fig:epersist}, lower right, where we plot the number of steps of the auxiliary system at each between gradient updates. Note that after the first few hundred iterations, the auxiliary system only needs one iteration after each parameter update. Overall, these results suggest that the persistent adjoint method could be a useful algorithm for finding reaction rates consistent with observed data.

\subsection{Attractor networks}
The second example we consider is a learning problem in attractor networks, a type of neural network with feed-back connections. The network is assumed to have $n$ nodes, and the state of each node is represented by a single number. Therefore the state space of the entire network is a vector in $X=\mathbb{R}^n$, and the parameters of the network are stored in a matrix in $W = \mathbb{R}^{n\times n}$, while external input is represented as a vector $b \in \reals^n$. At each time $n$, the state of each node is updated by a local computation involving its neighboring states. Formally, the function
$f(\cdot,\cdot;u) : X\times W \to X$ is defined using the $n$ component function $f_i :X\times W \to \reals$ as
\begin{equation}\label{f-nn}
  f_{i}(x,w;u) = \sigma\bigg(\sum\limits_{j=1}^{n}w_{i,j}x_j +  u_i\bigg).
\end{equation}
That is, each node $i$ will sense the states of its neighbors (a neighbor being defined as any node $j$ where $w_{i,j} \neq 0$), compute a weighted sum of their values, add the external input $u_i$, and then apply the nonlinear function $\sigma$. Here, the nonlinear function is the logistic function $\sigma(x) = (1+\exp(-x))^{-1}$.

Iterating an attractor, to compute $f(x,w;u), f^{2}(x,w;u), \hdots,$ defines a dynamical system, and under certain conditions on the parameters of the network, this dynamical system converges to a fixed-point that is independent of the initial state of the system. This is formalized in the following proposition. This proposition uses the notion of an absolute norm  \cite{matanal}; this is any norm on Euclidean space such that
$\|(x_1,\hdots,x_n)\| = \|(|x_1|,|x_2|,\hdots,|x_n|)\|$. 
\begin{prop}
  Let $\|\cdot\|$ be any absolute norm.
  Then \eqref{f-nn} defines a contraction when
  $\|w\| < 4$,
  and a contraction coefficient is
  $\beta_x = \|w\|/4$.
\end{prop}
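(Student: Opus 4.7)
The plan is to bound the Jacobian of $f(\cdot,w;u)$ in the operator norm induced by $\|\cdot\|$ and then invoke the mean value inequality for vector-valued maps, which gives the contraction constant directly. First I would compute the Jacobian: from \eqref{f-nn}, the $(i,j)$ entry of $\tfrac{\partial f}{\partial x}(x,w;u)$ is $\sigma'\bigl(\sum_k w_{i,k}x_k + u_i\bigr)\,w_{i,j}$, so the Jacobian factors as $J(x) = D(x)\,w$, where $D(x)$ is the diagonal matrix whose $i$-th entry is $\sigma'\bigl(\sum_k w_{i,k}x_k + u_i\bigr)$. A short calculation with $\sigma(t)=1/(1+e^{-t})$ gives $\sigma'(t) = \sigma(t)(1-\sigma(t)) \in (0,1/4]$, with the maximum attained at $t=0$. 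Hence each diagonal entry of $D(x)$ lies in $[0,1/4]$, uniformly in $x$ and $u$.

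Next I would show that for any absolute norm $\|\cdot\|$, the induced operator norm of such a diagonal matrix is at most $1/4$. The key property is that absolute norms are monotone: if $|a_i|\leq |b_i|$ componentwise, then $\|a\|\leq \|b\|$. This can be derived from the definition (see, e.g., \cite{matanal}). Given that, for any $v$, the absolute norm property and monotonicity yield
\begin{equation*}
\|D(x)v\| = \bigl\|(|d_1 v_1|,\dots,|d_n v_n|)\bigr\| \leq \bigl\|(\tfrac{1}{4}|v_1|,\dots,\tfrac{1}{4}|v_n|)\bigr\| = \tfrac{1}{4}\|v\|,
\end{equation*}
so $\|D(x)\|_{\mathrm{op}} \leq 1/4$ uniformly in $x$. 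Combined with submultiplicativity of induced operator norms, this gives $\|J(x)\|_{\mathrm{op}} \leq \|D(x)\|_{\mathrm{op}}\,\|w\| \leq \|w\|/4$ for every $x$.

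Finally I would conclude by the mean value inequality: for any $x_1,x_2$,
\begin{equation*}
f(x_1,w;u) - f(x_2,w;u) = \int_0^1 J\bigl(tx_1 + (1-t)x_2\bigr)\,(x_1-x_2)\,dt,
\end{equation*}
and taking norms gives $\|f(x_1,w;u) - f(x_2,w;u)\| \leq (\|w\|/4)\,\|x_1-x_2\|$. Thus whenever $\|w\|<4$, $f$ is a contraction in $x$ with coefficient $\beta_x = \|w\|/4$, as claimed.

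The only subtle step is the uniform operator-norm bound on the diagonal matrix $D(x)$: it is not automatic that a diagonal matrix with small entries has small induced norm in an arbitrary norm, and here we genuinely use the absoluteness (equivalently, monotonicity) of $\|\cdot\|$. All other steps are standard manipulations once the Jacobian has been factored.
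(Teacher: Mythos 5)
Your proof is correct and follows essentially the same route as the paper: factor the Jacobian as $D(x)w$, bound $\|D(x)\|$ by $\|\sigma'\|_{\infty}=1/4$ using the absoluteness of the norm, and conclude $\|\partial f/\partial x\|\leq\|w\|/4$. The only cosmetic difference is that you derive the diagonal-matrix norm bound from monotonicity of absolute norms by hand, whereas the paper cites it directly (Theorem 5.6.3 of \cite{matanal}).
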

\begin{proof}
  Let $D$ be the diagonal matrix $D_{i,i} =  \sigma'\Big(\sum\limits_{j=1}^{n}w_{i,j}x_{j} + u_i\Big)$.
  Then the derivative of $f$ with respect to $x$ is
  $$  \frac{\partial f_i}{\partial x_j}(x,w;u) =
  D_{i,i}w_{i,j}
  $$
  and   $\frac{\partial f}{\partial x}(x,w;u) = Dw$.
  For any absolute norm $\|\cdot\|$ and diagonal matrix $D$,  it holds that
  $\|D\|= \max_{1\leq i\leq n}|D_{i,i}|$ (Theorem 5.6.3 in \cite{matanal}).
  Therefore $\left\|\frac{\partial f}{\partial x}(x,w;u)\right\| \leq \|D\|\|w\| \leq \|\sigma'\|_{\infty}\|w\|.$ Finally, note that $\|\sigma'\|_{\infty} = 1/4$.
\end{proof}

Note that norms that are absolute  include the $p$-norms $\|\cdot\|_1,\|\cdot\|_{2}$ and $\|\cdot\|_{\infty}$. %We denote the fixed-point determined by a specific $w$ and $u$ by $x^{*}(w;u)$.
\subsubsection{Problem formulation} We use the same notation as in Section \ref{prob-form} to define our dynamical system and objective function: For a set $u^1,\hdots,u^m$ of external inputs, we let $\widetilde{x}^1,\hdots, \widetilde{x}^{m}$ be approximate fixed-points for those inputs.
Then, define
$f^{\operatorname{Total}}$ and $e^{\operatorname{Total}}$ as :
\begin{align}
f^{\operatorname{Total}}((x^1,\hdots,x^{10}),w)
&=
\left(
f(x^{1},w;u^1),\hdots, f(x^{10},w;u^{10})
\right), \label{f-nn-total}
\\
e^{\operatorname{Total}}(x^{1},\hdots,x^{10})
&= \frac{1}{m}\sum\limits_{i=1}^{m}\|x^i - \widetilde{x}^i\|^{2}_{2}. \label{e-nn-total}
\end{align}
The optimization problem can then be formulated as
\begin{equation}\label{nn-problem}
  \begin{split}
  \min\limits_{w \in \reals^{n\times n}} &e^{\total}(x^1,\hdots,x^{10})\\
  \text{ subject to }
  &(x^1,\hdots,x^{10}) = f^{\total}((x^1,\hdots,x^{10}),w).
  \end{split}
\end{equation}
The function $f^{\total}$ is a parallel combination of systems that are contractions in  the norm $\|\cdot\|$. Therefore, $f^{\total}$ is itself a contraction in the norm $\sum\limits_{1\leq i \leq m}\|x_i\|$, with contraction coefficient
\begin{equation}\label{nn-contraction-coeff}
  \beta_x^{\total} = \|\sigma'\|_{\infty}\|w\|
  \end{equation}
  In this case, contraction is only guaranteed when $\|w\| < 4$ (in some norm $\|\cdot\|$). This means that optimization could become unstable or fail once $w$ grows large in magnitude, since there may no longer be a unique fixed point for the network. For the problems we considered, optimization always ended before this became an issue. We plot the observed contraction coefficient for one of the sample runs below in Figure \ref{fig:nn-epersist}.

\begin{figure}[t]
\centering
  \begin{subfigure}{    \includegraphics[width=.5\linewidth]{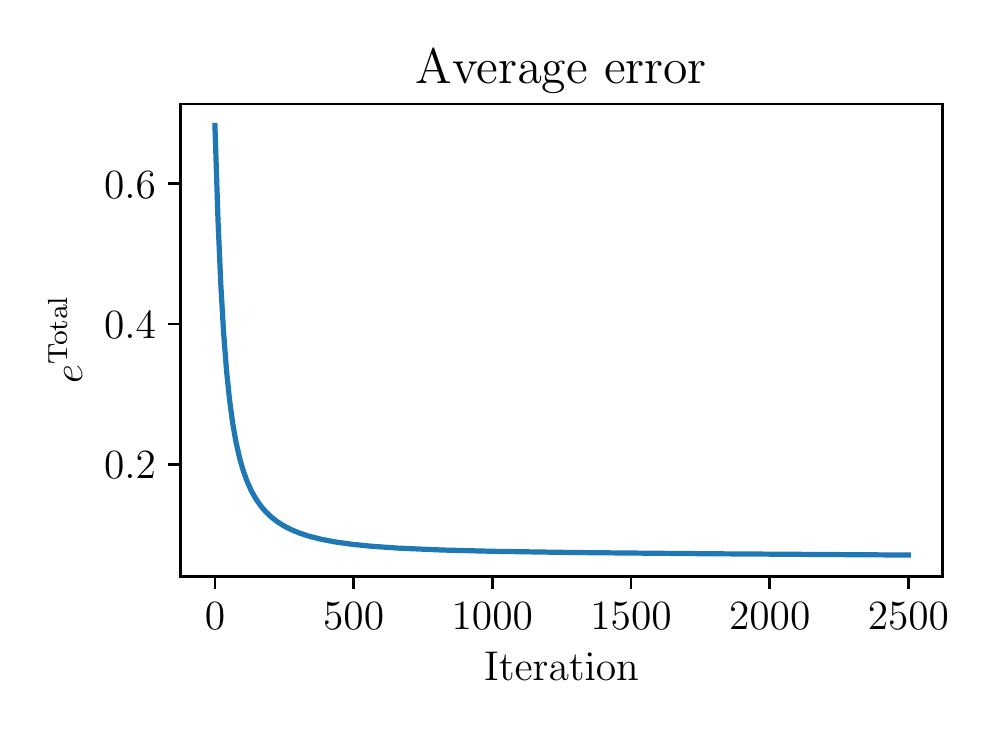}\vspace{-1em}}
  \end{subfigure}
%  \begin{subfigure}[b]{0.49\linewidth}
%    \includegraphics[width=\linewidth]{{notebooks_and_code/nn_MatrixNorm}.pdf}
%    \caption*{b}
%  \end{subfigure}
  \begin{subfigure}{\hspace{-1em} \includegraphics[width=.5\linewidth]{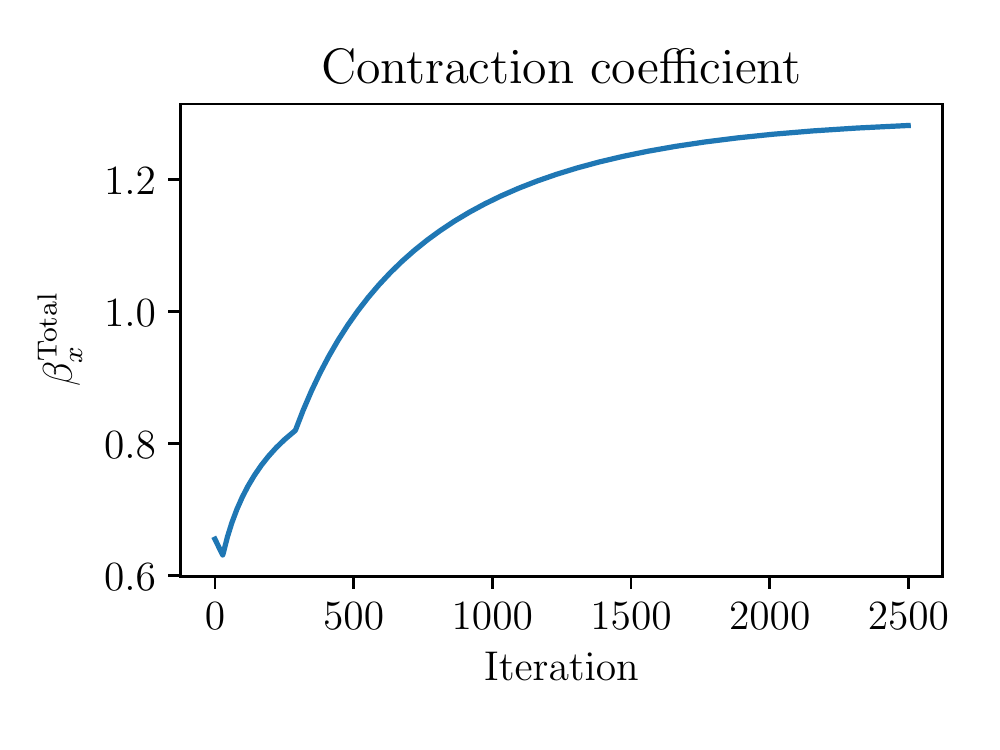}\vspace{-1em}}
  \end{subfigure}
%  \begin{subfigure}[b]{0.49\linewidth}
%    \includegraphics[width=\linewidth]{{notebooks_and_code/nn_NumberOfIts}.pdf}
%    \caption*{d}
%  \end{subfigure}
\caption{Result for attractor network training. The left figure shows the value of the objective function as optimization progresses. The figure on the right  shows the contraction coefficient, given in Equation \eqref{nn-contraction-coeff}, at each iteration, relative to the norm $\|\cdot\|_{\infty}$\label{fig:nn-epersist}}
\end{figure}

\subsubsection{Optimization results}
We generated the training data using the following recipe. First, we generated a random matrix of weights by sampling the weights along each connection from $\N$. Then we generated $10$ input vectors $u^1,\hdots, u^{10}$, also by sampling entries from $\N$.  For each input vector, we approximately compute the fixed point of the network, obtaining the estimates $\widehat{x}^1,\hdots,\widehat{x}^{10}$.  Then, starting from a new random set of weights $w^0$, we train the network to map the inputs $u^i$ to the outputs $\widehat{x}^i$, using the persistent adjoint method.

Algorithm 1 was run with the following inputs. 
\begin{itemize}
\item
  $
  T = T^{Adj(f^{\total},e^{\total})},
  g = g^{Adj(f^{\total},e^{\total})}$  with $f,e$  as in \eqref{f-nn-total}, \eqref{e-nn-total}, resp.,
\item
  $ \epsilon = 0.4, \delta = 0.01 $
 \item
   $z_{0} = ((x^1,\hdots,x^{10}),(y^1,\hdots,y^{10}))$ is set to $(0,0) \in \mathbb{R}^{100}$
 \item
   $\|\cdot\|_W$ is the Frobenius norm $\|w\|_W = ( \sum_{i=1}^{n}\sum_{j=1}^{n}w_{i,j}^2)^{1/2}$. 
   \item 
$ \| \cdot \|_{Z} $ is $\|(x,y)\|_{Z} = \|x\|_{X} + \|x\|_{X^{*}}$ \quad (See \eqref{the-norm}.)
\end{itemize}

Figure \ref{fig:nn-epersist} shows the behavior of the attractor network as optimization progressive. 
The left plot in Figure \ref{fig:nn-epersist}  shows the error $E$ as training progresses, estimated by iterating $f^{\total}$ to a convergence threshold, and evaluating $e^{\total}(x)$ on the resulting estimate of the fixed-point. The right plot in Figure  \ref{fig:nn-epersist}  shows an estimate of the contraction coefficient (given in Equation \eqref{nn-contraction-coeff}.) In this case, we measure contraction relative to the norm $\|\cdot\|_{\infty}$ on the space of matrices. This means the norm $\|w\|$ is the maximum-absolute-row-sum. Note that the coefficient is greater than one at the end of training. However, we still observe that the error is nearly zero. This can be explained by the fact that condition $\|\sigma'\|_{\infty}\|w\|_{\infty} < 1$ is only a sufficient condition for contraction, and there may be another metric in which contraction could be verified, even when this fails to hold for the norm $\|\cdot\|_{\infty}$.

\section{Conclusion}
This article studied an algorithm for optimizing the fixed-point of a contraction mapping.
The algorithm is based on the construction of an auxiliary system out of the various derivatives of the underlying system $f$ and objective $e$.
This auxiliary process inherits the contraction property of the underlying system, 
and this allows us to apply a more general result about dynamic approximation to obtain gradient convergence for certain inputs to the algorithm. The procedure uses a dynamic time-scaling, in which the time spent computing derivative approximations is based on the magnitude of previous derivative estimates.

Our numerical results suggest the algorithm is practical. 
There are several extensions that may be of interest. These include the optimization of stochastic systems, where the problem is to optimize the stationary distribution of a Markov chain. 
%Currently, the results require contraction in a vector-norm, and another interesting extension is to consider more general types of contraction. For instance the treatment of a system evolving on a manifold may require a different notion of contraction, together with a reinterpretation of the various derivatives involved.
%Also, as noted above, the algorithm may be applicable to trajectory optimization. 
In its current form, the convergence proof relies on Lipschitz constants to define the parameters $\epsilon, \delta$ and the norm $\|\cdot\|_{Z}$. In some problems of interest, these constants are difficult or impossible to bound, and when they are available the resulting constants may be too conservative. Therefore it would be useful to make the algorithm depend less on these quantities. 

\bibliography{pam}{}
\bibliographystyle{siam}

\end{document}